\newtheorem{theorem}{Theorem}[section]
\newtheorem{lemma}[theorem]{Lemma}
\newtheorem{proposition}[theorem]{Proposition}
\theoremstyle{definition}
\newtheorem{definition}[theorem]{Definition}
\newtheorem{algorithm}{Algorithm}[section]
\newtheorem{remark}{{\it Remark}}[section]
\numberwithin{equation}{section}
\newcommand{\lb}{\llbracket}
\newcommand{\rb}{\rrbracket}
\newcommand{\Lb}{\{\hspace{-4.0pt}\{}
\newcommand{\Rb}{\}\hspace{-4.0pt}\}}
\newcommand{\vertiii}[1]{{\left\vert\kern-0.25ex\left\vert\kern-0.25ex\left\vert #1 
    \right\vert\kern-0.25ex\right\vert\kern-0.25ex\right\vert}}
\newcommand{\Bn}{{\boldsymbol{n}}}
\newcommand{\Bq}{{\boldsymbol{q}}}
\newcommand{\Bv}{{\boldsymbol{v}}}
\newcommand{\BV}{{\boldsymbol{V}}}
\newcommand{\Ce}{{\mathcal E}}
\newcommand{\Cf}{{\mathcal E}}
\newcommand{\Cj}{{\mathcal J}}
\newcommand{\Cs}{{\mathcal S}}
\newcommand{\Ct}{{\mathcal T}}
\definecolor{red}{rgb}{1,0,0}
\definecolor{blue}{rgb}{0,0,1}
\DeclareMathOperator*{\argmin}{arg\,min}
\crefname{equation}{}{}
\newtheorem{pps}[theorem]{Proposition}
\newcommand{\bN}{\mathbb{N}}
\newcommand{\bX}{\mathbb{X}}
\newcommand{\bR}{\mathbb{R}}
\newcommand{\bS}{\mathbb{S}}
\newcommand{\bW}{\mathbb{W}}
\newcommand{\norm}[1]{{\left|#1\right|}}
\newcommand{\normm}[1]{{\left\|#1\right\|}}
\newcommand{\nj}[2]{{\left\langle{#1},{#2}\right\rangle}}
\newcommand{\kh}[1]{{\left(#1\right)}}
\newcommand{\zkh}[1]{{\left[#1\right]}}
\newcommand{\zzkh}[1]{{\left\llbracket#1\right\rrbracket}}
\newcommand{\dkh}[1]{{\left\{#1\right\}}}
\newcommand{\jz}[1]{\begin{bmatrix}#1\end{bmatrix}}
\DeclareMathOperator{\Id}{{\it I}}
\DeclareMathOperator{\prox}{prox}
\DeclareMathOperator{\Proj}{Proj}
\DeclareMathOperator{\sgn}{Sgn}
\newcommand{\FPPA}{FP\textsuperscript{2}\!A}
\begin{document}

\title{A Non-gradient DG method for second-order Elliptic Equations in the Non-divergence Form}

\thanks{W. Qiu's research is partially supported by the Research Grants Council of the Hong Kong Special
    Administrative Region, China. (Project Nos. CityU 11302219, CityU 11300621). K. Shi is partially 
    supported by National Science Foundation (Award Number: 2012235). Y. Xu is supported in part by the US National Science Foundation under grants DMS-1912958 and DMS-220838, and by the US National Institutes of Health under grant R21CA263876.
 As a convention the names of the authors 
    are alphabetically ordered. All authors contributed equally in this article.
}

\author{Weifeng Qiu}
\address{Department of Mathematics, City University of Hong Kong, 83 Tat Chee Avenue, Hong Kong, China.}
\email{weifeqiu@cityu.edu.hk}
%\urladdr{}

\author{Jin Ren}
\address{Department of Mathematics and Statistics, Old Dominion University, Norfolk, VA 23529, USA.}
\email{jren@odu.edu}

\author{Ke Shi}
\address{Department of Mathematics and Statistics, Old Dominion University, Norfolk, VA 23529, USA.}
\email{kshi@odu.edu}

\author{Yuesheng Xu}
\address{Department of Mathematics and Statistics, Old Dominion University, Norfolk, VA 23529, USA.}
\email{y1xu@odu.edu}

\begin{abstract}
$L^1$ based optimization is widely used in image denoising, machine learning and related applications. One of the main features of such approach is that it naturally provide a sparse structure in the numerical solutions. In this paper, we study an $L^1$ based mixed DG method for second-order elliptic equations in the non-divergence form. The elliptic PDE in nondivergence form arises in the linearization of fully nonlinear PDEs. Due to the nature of the equations, classical finite element methods based on variational forms can not be employed directly. In this work, we propose a new optimization scheme coupling the classical DG framework with recently developed $L^1$ optimization technique. Convergence analysis in both energy norm and $L^{\infty}$ norm are obtained under weak regularity assumption. 
Such $L^1$ models are nondifferentiable and therefore invalidate traditional gradient methods. Therefore all existing gradient based solvers are no longer feasible under this setting. To overcome this difficulty, we characterize solutions of $L^1$ optimization as fixed-points of proximity equations and utilize matrix splitting technique to obtain a class of fixed-point proximity algorithms with convergence analysis. Various numerical examples are displayed to illustrate the numerical solution has sparse structure with careful choice of the bases of the finite dimensional spaces.
%%%%%
Numerical examples in both smooth and nonsmooth settings are provided to validate the theoretical results.    
\end{abstract}

\subjclass[2000]{65N30, 65L12}

\keywords{discontinuous Galerkin, non-gradient optimization, non-divergence form}

\maketitle

\section{Introduction}

{{One of the main features of using $L^1$ norm based optimizations is that the approximation naturally has a sparse representation under specified transformations or bases. This feature is particularly important in large scale computations. For instance, in multi-physics applications, one needs to use the solution from one model as inputs/parameters for the subsequent physical model. 
A typical challenge lays in the transition of computation between models with different scales. 
Due to the curse of dimensionality, the computational complexity grows exponantially as the number of independent inputs/paramters grow linearly. In other words, too many independent inputs will lead to extremely large system to solve. Therefore, it is desirable to develop {\it sparsity promoting} algorithms such that the input, the solution from the foremal model can be represented by a small number of basis functions. }}

{{In the area of numerical PDEs, numerical schemes based on $L^1$ stablization are less explored. The main challenge is that the $L^1$ norm based scheme will result in minimizing an non-differentiable functional. In \cite{Guermond2004}, the author studied a finite element method for solving first order PDEs under the $L^1$ norm. The author overcome the difficulty of minimizing the non-differentiable functional by regularizing the functional with a sequence of differentiable minimization problems. }}

{{In this paper, we propose a numerical scheme with two main features: (1) using a sparsity promoting functional in the minimization scheme and adopting fixed-point proximity algorithm \cite{micchelli2011proximity,micchelli2013proximity,li2015multi,RenInexact} with exact convergence analysis of non-smooth optimization; (2) A sparsity promoting base of the finite element space \cite{Chen1999construction,Chen2002Fast,Micchelli1994Using,Micchelli1997Reconstruction}. The first condition is met by using $L^1$ stabilization while the second is met by using DG finite elements. In \cite{Chen15} the authors systematically studied the construction of multiscale bases under the DG setting. It is worth to mention that the current study is an ongoing effort to develop new numerical schemes such that the numerical solutions with sparse structure in the finite dimensional space. As a step-stone, we consider the following elliptic equations in non-divergence form:}}

\begin{align}
\label{non_div_governing}
A:D^2 u &= f \quad \text{in} \quad \Omega, \\
u & = 0, \quad \text{on} \quad \partial \Omega. 
\end{align}

Here the domain $\Omega \in \mathbb{R}^d$ $(d=2,3)$ is an open and bounded polytope and the coefficient matrix $A =[A_{ij}] \in [L^{\infty}(\Omega)]^{d\times d}$ is a symmetric positive definite matrix with eigenvalues bounded between $(0, \Lambda)$ with $\Lambda > 0$. $D^2 u = [\frac{\partial^2 u}{\partial x_i \partial x_j}]$ denotes the Hessian matrix of $u$ and $A:D^2u := \sum_{ij} A_{ij} \frac{\partial^2 u}{\partial x_i \partial x_j}$. Note that $A$ is not necessarily differentiable. Indeed if $A \in [C^1(\Omega)]^{d\times d}$ we may recast the above system into a divergence form \cite{QiuZhang20}. Hence, the standard steady-state diffusion problem is a special case of \eqref{non_div_governing}. 

The PDEs in non-divergence form like \eqref{non_div_governing} arises in the linearization of fully nonlinear PDEs such as stochastic control problems \cite{FlemingSoner06}, Monge-Amp\'ere equations \cite{NeilanSalgadoZhang20}, and the fully nonlinear Hamilton-Jacobi-Bellman equations \cite{JensonSmears12}. One of the main challenges around this problem is the fact that the coefficient matrix $A(x)$ in general is not smooth in many practical settings. In this case, the standard weak variational formulation of the PDE is no longer available. Indeed, analytical results on the PDE level such as the existence and uniqueness of solutions for \eqref{non_div} are often characterized in the classical or strong sense. 

Due to the possible non-smooth cofficient matrix, developing standard numerical methods for problem \eqref{non_div_governing} with variational form is very difficult. In \cite{LakkisPryer11} the authors study a mixed finite element method based on Galerkin framework by introducing a so-called ``finite element Hessian'' and recently in \cite{LakkisPryer21} they extend the idea with discontinuous Galerkin elements. The analysis of the DG method is provided in \cite{Neilan17}.
In \cite{FengHeningsNeilan17, FengNeilanStephan18} the authors consider two DG type methods based on variational form. The key ingredient for the error estimates is on the discrete Calderon-Zygmond estimates which assumes a full $H^{2,p}$ regularity for the PDE operator: 
\begin{align}
\label{W2p_assumption}
\| w \|_{2,p,\Omega} \le C_{p} \| A: D^{2}w \|_{p,\Omega}, \quad \forall w \in W_{0}^{1,p}(\Omega) \cap W^{2,p}(\Omega).
\end{align}
Here $1 \leq p < \infty$. This regularity assumption holds if the coefficient matrix 
$A \in [C^0(\overline{\Omega})]^{d \times d}$ and $\partial \Omega$ is $C^{1,1}$. 
In contrast, several methods are studied \cite{SmearsSuli13,Gallistl17} for equations with non-smooth coefficient matrix. they considered DG method for discontinuous $A$ with Cord\'es condition and convex polyhedral domain $\Omega$. A primal-dual weak Galerkin method is introduced by Wang and Wang \cite{WangWang18}. Roughly speaking, Cord\'es condition requires that the possibly discontinuous coefficient matrix $A$ is not too far away from identity matrix multiplied by a constant. Consequently the difference between the nondivergence form and a diffusive form is bounded by the $|D^2 u|$ pointwisely.
Recently, a $L^p$-weak Galerkin method for the problem is developed in \cite{CaoWangXu22} based on primal dual weak Galerkin framework. They recast the problem into a Min-Max optimization problem. All the above mentioned works require a full $W^{2,p}$ 
regularity assumption (\ref{W2p_assumption}) for the PDE operator in order to establish the stability of the methods (methods in \cite{SmearsSuli13, Gallistl17, WangWang18} are valid only for $p = 2$).

In a recent work by one of the authors Qiu \cite{QiuZhang20}, they develop a first order system least squares method for \eqref{non_div_governing}. It is worth to mention that for the stability of the method, their method only requires $W^{1,2}$ regularity of the PDE operator:
\begin{align}
\label{H1_assumption}
\| w \|_{1,2,\Omega} \le C \| A: D^{2}w \|_{2,\Omega}, 
\end{align}
for any $w \in \{ v \in W_{0}^{1,2}(\Omega): A:D^{2}v \in L^{2}(\Omega)\}$. All the above mentioned work are formulated and implemented as Galerkin type schemes. Nochetto and Zhang \cite{NochettoZhang18} studied a two-scale method, which is based on the integro-differential approach and focuses on $L^{\infty}$-error estimates. 

In this paper we advocate a different approach to discretize the equation \eqref{non_div_governing} under the minimization framework. Namely, we employ standard DG element for the primary unknown $u$ and its gradient $\nabla u$. We augment the minimization problem with $L^1$-type penalty terms. Comparing with existing finite element methods for solving \eqref{non_div} based on variational formulations, our approach has a few distinct features. First, the discrete problem does not need to use integration by parts which fits the original PDE naturally. No artificial differential operators are needed. Consequently, the error estimates is relatively neat and simple. Secondly, we are able to relax the regularity requirement of the PDE operator from $W^{2,p}$ (\ref{W2p_assumption}) to $W^{1,2}$ (\ref{H1_assumption}). We only need the exact solution of the PDE $u \in W^{2+\delta,2}$ to achieve convergence result in the energy norm comparable with above mentioned existing work. 
Furthermore, we establish pointwise error estimate based on the Aleksandrov-Bakelman-Pucci estimates \cite{GilbargTrudinger1983}, when the domain $\Omega \in \mathbb{R}^d (d=2,3)$ is an open and bounded polytope and the coefficient matrix $A = A(x) \in [L^{\infty}(\Omega)]^{d\times d}$ is a symmetric positive definite matrix with eigenvalues bounded between $(0, \Lambda)$ with $\Lambda > 0$ (we need to assume $A$ is piecewise polynomial if $d = 3$). Finally, conventional methods such as least squares or Galerkin type methods based on variational forms are feasible for PDEs with smooth solutions. Nevertheless, when the PDE operator is lack of strong regularity such as \eqref{non_div_governing} and/or the desired energy functional is not differentiable ($L^1$ for instance), the weak formulation of a PDE is not application  under such settings.

%%%%

Unlike smooth optimization, the proposed $L^1$ optimization cannot be solved via gradient-based algorithms due to its nondifferentiability. To overcome the difficulty brought by nonsmoothness, here we adopt the framework of Fixed-Point Proximity Algorithms (\FPPA{}) originated from image processing and data science \cite{micchelli2011proximity,micchelli2013proximity,li2015multi,RenInexact} to solve the proposed $L^1$ optimization problem. \FPPA{} first characterizes solutions of $L^1$ optimization to be fixed-points of proximity equations of nonsmooth functions, then a matrix splitting technique provides a class of fixed-point proximity algorithms. The framework of firmly nonexpansive operators are then applied to prove the convergence of \FPPA{}. It is also proven in \cite{li2015multi} that \FPPA{} covers popular algorithms such as first-order primal-dual algorithms and alternating direction method of multipliers.

%%%%

The rest of the paper is organized as follows. In Section 2, we present the details of the mixed DG method for the problem \eqref{non_div_governing} with $L^1$ stabilization and the main error estimates results. In Section 3 we present the detailed proofs of the error estimstes. In Section 4, we review some preliminary approximation results and present the main error estimates of the method. Given the fact that the scheme is based on $L^1$ stabilization, the implementation of far from trivial, in Section 5, we will present the details on the iterative algorithm for the scheme and present the convergence analysis of the algorithm. Several numerical examples will be performed in Section 5 with concluding remarks in Section 6. 

%%%%%%%%%%%%%%%%%
\section{Mixed formulation for non-divergence form} 
%%%%%%%%%%%%%%%%%
In this section we present the mixed DG method for the problem \eqref{non_div_governing} as a minimization problem. We begin by specializing \eqref{non_div_governing} as a first order system:
\begin{subequations}
\label{non_div}
\begin{align}
\label{non_div_eq1}
\nabla \Bq - u & = 0 \quad \text{in} \quad \Omega, \\ 
\label{non_div_eq2}
A:\nabla \Bq &= f \quad \text{in} \quad \Omega, \\
\label{non_div_eq3}
u & = 0, \quad \text{on} \quad \partial \Omega. 
\end{align}
\end{subequations}

We define the finite element spaces for the numerical scheme for solving \eqref{non_div}. %We first introduce some standard notations and spaces for the scheme. 
We adopt the notation and norms for the spaces as in \cite{ABCD02}. 
We consider a family of conforming triangulations $\Ct_{h}$ made of {{shape-regular simplexes (triangles in 2D, tetrahedra in 3D). Here conforming means that for any two element $K, K' \in \Ct_h$, $\overline{K} \cap \overline{K'}$ is either $\emptyset$, a common vertex, a common edge or a common face of both elements. }} 
Let $\Cf_{h}^{I}$ denote the set of all interior faces of $\Ct_{h}$, and $\Cf_{h}^{B}$ denotes the set of 
all boundary faces. We define $\Cf_{h}:= \Cf_{h}^{I} \cup \Cf_{h}^{B}$. We use $h_K$ to denote the diameter 
of the element $K$, and $h_{F}$ the diameter of the edge $F\in \Cf_{h}^{B}$. The mesh size of $\Ct_{h}$ is defined as 
$h:= \max_{K\in \Ct_{h}}h_{K}$. We denote by $\Bn_{K}$ the unit outward normal vector on $\partial K$. We will drop the sub-index $K$ to denote a generic outer normal vector $\Bn$.
We also introduce the average and jump operators. Let $F = \partial K\cap \partial K^{\prime}$ be 
an interior face shared by $K$ and $K^{\prime}$.
%{\bf SK: We have $F$ for a generic edge. I suppose that they are not the same.} {\blue{ans: We assume the triangulation is conforming, so they will share the same edge. In the analysis we require this condition for interpolation. But I think it can be avoid with a refined analysis}} 
$\phi$ is a generic piecewise smooth function 
(scalar- or vector-valued). We define the average of $\phi$ on $F$ as 
$$\Lb \phi \Rb := \frac{1}{2}(\phi + \phi^{\prime})$$ where $\phi$ and $\phi^{\prime}$ denote the trace of 
$\phi$ from the interior of $K$ and $K^{\prime}$ {{respectively}}. Furthermore, let $w$ be a piecewise smooth function 
and $\Bv$ a piecewise smooth vector-valued field. Analogously, we define the following jumps on $F$:
\begin{align*}
& \lb w \rb  := w\Bn_{K} + w^{\prime} \Bn_{K^{\prime}},  
& \lb \Bv \cdot \Bn \rb := \Bv \cdot \Bn_{K} + \Bv^{\prime}\cdot \Bn_{K^{\prime}}.
\end{align*}
On a boundary face $F= \partial K \cap \partial\Omega$, we set accordingly $$\Lb \phi \Rb := \phi, \quad \lb w\rb := w \Bn, \quad \text{and} \quad \lb \Bv\rb_{N} := \Bv\cdot \Bn.$$ 
Let $P_k(D)$ denote the space of polynomials of degree no more than $k$ over the domain $D$. We define the following spaces for $k \ge 2$:
\begin{align*}
W_h &=\{w \in L^2(\Omega)| w_K \in P_k(K), \forall K \in \Ct_h\}, \\
\BV_h &=\{ \Bv \in [L^2(\Omega)]^d | \Bv_K \in [P_{k-1}(K)]^d, \forall K \in \Ct_h, \}, \\
W^c_h &= W_h \cap H^1_0(\Omega), \quad \BV^c_h = \BV_h \cap H(\text{div};\Omega).
\end{align*}
Finally, we use the standard notation for Sobolev norms: $\|\cdot \|_{k,p,D}$ and $|\cdot|_{k,p}$ are used for the standard $W^{k,p}(D)$ norms and semi-norms, respectively. In addition, for discrete functions we define:
$$
\|w\|_{k,p,\Ct_h}:= \left(\sum_{K \in \Ct_h} \|w\|^p_{k,p,K}\right)^{\frac{1}{p}},
$$
$$
\|w\|_{k,p,\Cf_h}:= \left(\sum_{F \in \Cf_h} \|w\|^p_{k,p,F}\right)^{\frac{1}{p}}, 
$$
$$
\|w\|_{k,p,\Cf^I_h}:= \left(\sum_{F \in \Cf^I_h} \|w\|^p_{k,p,F}\right)^{\frac{1}{p}},
$$
We drop the index $k$ when $k=0$ and we drop both indices $k,p$ for the standard $L^2$ norm. i.e.
$\|w\|_{p,D}:= \|w\|_{0,p, D}$ and $\|w\|_{D}:= \|w\|_{0,2,D}$.

We next describe our numerical scheme.
For $(w,v) \in W_h \times \BV_h$, we introduce the {\em penalty term}
\begin{align}
\label{stab_term}
\Cs(\Bv,w):= (h^{-1}\|\lb \Bv \cdot \Bn \rb\|_{1,\Cf^I_h} + h^{-2}\|\lb w \rb\|_{1,\Cf_h}) 
\end{align}
and the energy functional
$$
\mathcal{J}_h(w,\Bv) := \|A : \nabla \Bv - f\|^2_{\Ct_h} + h^{-2} \|\Bv - \nabla w\|^2_{\Ct_h} + \tau \Cs(\Bv,w),
$$
where the $h$-independent parameter $\tau$ can be chosen as any positive real number bigger or equal to $1$. 
We seek an approximation $(u_h, \Bq_h) \in W_h \times \BV_h$ by solving the minimization problem
\begin{align}
\label{mini_h}
(u_h, \Bq_h) &= \argmin_{(w,v) \in W_h \times \BV_h} \mathcal{J}_h(w,\Bv). 
\end{align}
The objective function of minimization problem \eqref{mini_h} is convex and thus, it has a (global) minimizer. However, since it involves a non-smooth penalty term, standard gradient type algorithm cannot be applied to this problem. Fixed-point iterative algorithms will be developed in section 4 for solving this minimization problem.

%Here the {\em penalty term $\Cs(\Bv,w)$} is defined as: 

\begin{remark}
For non-homogeneous boundary condition case, we should modify the penalty term (\ref{stab_term}) as:
\[
\Cs(\Bv,w) := \|\lb \Bv \cdot \Bn \rb\|_{1,\Cf^I_h} + h^{-2}\|\lb w \rb\|_{1,\Cf^I_h} + h^{-2} \|w - g\|_{1, \Cf_h^B}.
\]
\end{remark}

\begin{remark}
Note that in the energy functional $\mathcal{J}_h$ we can replace the $L^2$ norm with any $L^p$ norm with 
$1 \le p < \infty$; the error analysis should follow almost the same lines as $L^2$ since we do not rely on variational formulations. Nevertheless, we use the $L^2$ norm here for the sake of simplicity in the implementation. As part of our future work, we will develop efficient algorithms to treat general $L^p$ norms in the functional. 
\end{remark}

\begin{remark}
 We believe that the framework can be formulated with the primary variable $u$ only. It should not have substantial difficulty in the analysis of the method. Nevertheless, for the sake of simplicity in the implementation step, we recast the problem into the first order system \eqref{non_div} and approximate $u$ and $\Bq$ together. 
\end{remark}

In this paper, we provide two types of error estimates of the above scheme. We first provide the error estimate with the standard discrete $H^1$ norms. In this case we assume that the elliptic equation has the $H^1$ regularity: 
for any $w \in \{ v \in H_{0}^{1}(\Omega) : A:D^{2}v \in L^{2}(\Omega) \}$, 
\begin{equation}
\label{regularity_NonDiv}
\|w\|_{1,2,\Omega} \leq C \| A:D^{2}w \|_{\Omega}.
\end{equation}

It is worth to mention that the above regularity assumption is weaker than the well-known Cord\'es condition for the full $H^2$ regularity \cite{SmearsSuli13}. Namely, Cord\'{e}s condition requires $\Omega$ must be convex polyhedral domain and $A \in [L^{\infty}(\Omega)]^{d\times d}$ is symmetric positive definite and there exists $\epsilon \in (0,1]$ such that
\[
\frac{\sum_{i,j}a^2_{ij}}{({\bf Tr} A)^2} \le \frac{1}{n-1+\epsilon} \quad \text{a.e. in} \quad \Omega. 
\]
In contrast, the relaxed $H^1$ regularity \eqref{regularity_NonDiv} holds in more general settings. For instance, if $\Omega \in \mathbb{R}^3$ is a polyhedral domain (possibly nonconvex), we can establish \eqref{regularity_NonDiv} with similar assumption as Cond\'{e}s condition as follows:
\begin{proposition}\label{nonconvex_reg}
Let $\Omega \in \mathbb{R}^3$ be a polyhedral domain. If $A \in [L^{\infty}(\Omega)]^{3\times 3}$ is symmetric positive definite and there exists $\mu > 0$ such that
\[
\|A - \mu \text{Id}\|_{\infty} \le \mathcal{C}_{\Omega},
\]
where $\mathcal{C}_{\Omega}$ is a positive constant only depends on the domain $\Omega$. Then it holds that:
\[
\|w\|_{1,2,\Omega} \leq C \| A:D^{2}w \|_{\Omega},
\]
for all $w \in H^1_0(\Omega)$.
\end{proposition}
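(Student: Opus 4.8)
The plan is to reduce the claimed $H^1$ bound to a bound on $\|\Delta w\|_{\Omega}$ alone, and then to treat the non-Laplacian part of $A:D^2w$ as a controllable perturbation. First I would record the elementary estimate $\|w\|_{1,2,\Omega}\le C\,\|\Delta w\|_{\Omega}$, valid on \emph{any} bounded domain for $w\in H^1_0(\Omega)$ with $\Delta w\in L^2(\Omega)$ and \emph{without any convexity assumption}. Indeed, by density of $C_c^\infty(\Omega)$ in $H^1_0(\Omega)$ one has $\|\nabla w\|_{\Omega}^2=-\int_\Omega (\Delta w)\,w\,dx$, so Cauchy--Schwarz together with the Poincar\'e inequality gives $\|\nabla w\|_{\Omega}\le C_P\|\Delta w\|_{\Omega}$, and a further Poincar\'e step controls $\|w\|_{\Omega}$. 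Thus it suffices to show $\|\Delta w\|_{\Omega}\le C\|A:D^2w\|_{\Omega}$. (Here, since $A\in[L^\infty(\Omega)]^{3\times3}$, the estimate is understood for $w$ with $A:D^2w\in L^2(\Omega)$, which I take to mean $w\in H^2(\Omega)\cap H^1_0(\Omega)$.)

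Next, using the hypothesis, I would split
\[
A:D^2w=\mu\,\Delta w+(A-\mu\,\text{Id}):D^2w ,
\]
so that the triangle inequality and the pointwise Cauchy--Schwarz inequality for the Frobenius inner product yield
\[
\mu\,\|\Delta w\|_{\Omega}\le \|A:D^2w\|_{\Omega}+\mathcal{C}_{\Omega}\,\|D^2w\|_{\Omega},
\]
where the fixed equivalence constants between matrix norms on $\mathbb{R}^{3\times3}$ (relating $\|A-\mu\,\text{Id}\|_{\infty}$ to the pointwise Frobenius norm) have been absorbed into $\mathcal{C}_{\Omega}$. The decisive ingredient is then a Hessian--Laplacian (Miranda--Talenti type) estimate on the polyhedral domain,
\[
\|D^2w\|_{\Omega}\le C_{\Omega}\,\|\Delta w\|_{\Omega},\qquad w\in H^2(\Omega)\cap H^1_0(\Omega),
\]
with $C_\Omega$ depending only on $\Omega$. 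Granting this and substituting, I obtain $(\mu-\mathcal{C}_\Omega C_\Omega)\|\Delta w\|_\Omega\le\|A:D^2w\|_\Omega$; provided the threshold $\mathcal{C}_\Omega$ is calibrated so that $\mathcal{C}_\Omega C_\Omega<\mu$ (equivalently, $A$ is required to lie within a domain-determined relative distance of the scalar matrix $\mu\,\text{Id}$), this gives $\|\Delta w\|_\Omega\le(\mu-\mathcal{C}_\Omega C_\Omega)^{-1}\|A:D^2w\|_\Omega$, and combining with the first step finishes the proof.

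I expect the Hessian--Laplacian estimate to be the main obstacle, and the only place where the geometry of $\Omega$ genuinely enters. On convex domains it is the classical Miranda--Talenti inequality with $C_\Omega=1$; on a polyhedron the faces are flat, so in the Grisvard/Rellich-type identity $\int_\Omega|D^2w|^2=\int_\Omega|\Delta w|^2+(\text{edge terms})$ the boundary contribution is supported on the edges, with a sign governed by the dihedral angles. Securing a finite $C_\Omega$ in the \emph{nonconvex} case therefore amounts to controlling the reentrant-edge contributions, and this is the crux of the argument; it is presumably also the reason the statement is restricted to $d=3$ and (as noted elsewhere in the paper) to piecewise-polynomial $A$. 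Once $C_\Omega$ is in hand, the perturbation argument above is routine, and the explicit dependence $\mathcal{C}_\Omega<\mu/C_\Omega$ makes transparent both the role of the ``closeness to a scalar matrix'' assumption and its kinship with the Cord\`es condition.
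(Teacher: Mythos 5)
Your two reduction steps are sound: the bound $\|w\|_{1,2,\Omega}\le C\|\Delta w\|_{\Omega}$ via integration by parts and Poincar\'e, and the splitting $A:D^2w=\mu\,\Delta w+(A-\mu\,\mathrm{Id}):D^2w$ with absorption, are both correct. But the proof has a genuine gap at exactly the point you yourself flag as ``the crux'': the Hessian--Laplacian estimate $\|D^2w\|_{\Omega}\le C_{\Omega}\|\Delta w\|_{\Omega}$ for $w\in H^2(\Omega)\cap H^1_0(\Omega)$ on a possibly \emph{nonconvex} polyhedron is never proved, and it cannot be obtained by the route you sketch. In the Grisvard/Rellich identity, the edge contributions at reentrant edges carry the unfavorable sign and are not lower-order terms; their uncontrollability is precisely the well-known failure of $H^2$ regularity for the Dirichlet Laplacian on nonconvex domains (cf.\ the $r^{2/3}\sin(2\theta/3)$ singularity that the paper itself uses in its L-shaped-domain experiments). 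Restricted to the subspace $H^2\cap H^1_0$, such an inequality is a closed-range statement: in 2D it does follow from Grisvard's theorem that $\Delta(H^2\cap H^1_0)$ is closed with finite codimension, but in 3D it requires Kondratiev/Mellin analysis of edge and vertex exponents, its constant degenerates when a vertex exponent sits at the critical value, and it is not available off the shelf for general polyhedra. So the ``decisive ingredient'' is not a routine lemma to be cited; it is the entire difficulty, and your argument does not close it. (Incidentally, the piecewise-polynomial assumption on $A$ that you invoke as motivation belongs to the paper's $L^\infty$ error estimate in 3D, not to this proposition.)

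The paper's proof avoids this obstruction by never working at $p=2$. It runs essentially your perturbation/absorption argument, but in $W^{2,p}$ for $p\in[\tfrac65,\tfrac43)$, where a theorem of Dauge gives $\|w\|_{2,p,\Omega}\le C_p\|\Delta w\|_{p,\Omega}$ on an \emph{arbitrary} (possibly nonconvex) 3D polyhedron: the range $p<\tfrac43$ is exactly the one in which all reentrant edge and corner singularities still possess full $W^{2,p}$ regularity, so no Miranda--Talenti-type inequality is needed. The claimed estimate is then recovered from the Sobolev embedding $W^{2,6/5}(\Omega)\hookrightarrow H^1(\Omega)$, valid for $d=3$ --- this, together with Dauge's theorem, is why the proposition is stated for 3D polyhedra --- and finally $\|A:D^2w\|_{6/5,\Omega}\le C\|A:D^2w\|_{\Omega}$ by H\"older on the bounded domain. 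To repair your write-up, replace the unproven $L^2$ Hessian--Laplacian lemma by this small-$p$ regularity result and rerun your absorption argument there; as it stands, the proposal rests on an inequality that is unproven and, in the generality required, not known to hold.
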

\begin{proof}
By triangle inequality, for any $\mu > 0$ we have
\begin{align*}
\|\mu \Delta w\|_{p,\Omega} & \le \|A:D^2 w - \mu \Delta w\|_{p,\Omega} + \|A:D^2 w\|_{p,\Omega} 
 \le \|A - \mu \text{Id}\|_{\infty} \|w\|_{2,p,\Omega} +  \|A:D^2 w\|_{p,\Omega}.
\end{align*}
If $\Omega \in \mathbb{R}^3$ be a polyhedral domain, in \cite{Douge92} it shows that for any $p \in [\frac65, \frac43)$, there exists $C_p > 0$ such that
\[
\|w\|_{2,p,\Omega} \le C_p \|\Delta w\|_{p,\Omega}.
\]
Inserting this inequality in the previous step we have
\[
C_p^{-1} \mu \|w\|_{2,p,\Omega} \le \|A - \mu \text{Id}\|_{\infty} \|w\|_{2,p,\Omega} +  \|A:D^2 w\|_{p,\Omega}.
\]
If there exists a $\mu>0$ such that $\|A - \mu \text{Id}\|_{\infty} < \frac12 C_p^{-1} \mu \|w\|_{2,p,\Omega}$, we can have a bound:
\[
\|w\|_{2,p,\Omega} \le  \frac{2C_p}{\mu} \|A:D^2 w\|_{p,\Omega}.
\]
Finally, by the Sobolev embedding, if $p = \frac65$ with $d=3$ we have
\[
\|w\|_{1,2,\Omega} \le C \|w\|_{2,p,\Omega} \le  \mathcal{C}_{\Omega} \|A:D^2 w\|_{p,\Omega}.
\]
Here $\mathcal{C}_\Omega$ only depends on the domain $\Omega$. This completes the proof.
\end{proof}

Next we present the first error estimate of the scheme \eqref{mini_h} which is summarized as follows:

\begin{theorem}
\label{main_error}
Let $(u,\Bq)$ be the exact solution of problem \eqref{non_div} that satisfies the regularity assumption \eqref{regularity_NonDiv} and $u \in H^{2+\delta}(\Omega)$ with 
$\delta > 0$. If $(u_h, \Bq_h)$ is a minimizer of \eqref{mini_h}, then 
\begin{equation}\label{main_result1}
\|u - u_h\|_{1,2,\Ct_h} +\|\Bq - \Bq_h\|_{\Ct_h} 
\le C \mathcal{C}_f
 h^{\min\{k-1,\delta\}}\|u\|_{\min\{k+1,2+\delta\},2,\Omega}.
\end{equation}
In addition, if the stronger regularity assumption \eqref{W2p_assumption} holds for the non-divergence operator, then we have the optimal convergence with respect to the $H^2$ norm:
\begin{equation}\label{main_result2}
\|u - u_h\|_{2,2,\Ct_h} 
\le C \mathcal{C}_f
 h^{\min\{k-1,\delta\}}\|u\|_{\min\{k+1,2+\delta\},2,\Omega}.
\end{equation}
Here $\mathcal{C}_f := \max ( \tau^{-2}\Vert f\Vert_{\Omega}^{2}, 1)$.
\end{theorem}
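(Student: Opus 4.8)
The plan is to regard \eqref{mini_h} as a first-order-system least-squares scheme and to prove quasi-optimality from three ingredients: the minimizing property, a consistency bound on the functional at an interpolant, and a discrete stability (coercivity) estimate into which the regularity hypothesis \eqref{regularity_NonDiv} feeds. Write $e_u = u - u_h$ and $\Be_q = \Bq - \Bq_h$. Since the exact pair satisfies $\Bq = \nabla u$ and $A:\nabla\Bq = f$, on each $K \in \Ct_h$ one has the two element-wise identities $A:\nabla\Be_q = -(A:\nabla\Bq_h - f)$ and $\nabla e_u = \Be_q + (\Bq_h - \nabla u_h)$. Thus the first volumetric residual in $\mathcal{J}_h$ controls the Hessian-contraction of the error, while the second controls the gap between $\nabla e_u$ and $\Be_q$; in particular $\|\nabla e_u\|_{\Ct_h}$ and $\|\Be_q\|_{\Ct_h}$ are interchangeable up to $\|\Bq_h - \nabla u_h\|_{\Ct_h} \le h\sqrt{\mathcal{J}_h(u_h,\Bq_h)}$.

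For the consistency step I would take $\Pi_V\Bq$ and $\Pi_W u$ to be standard (Scott--Zhang-type) interpolants of $\Bq = \nabla u \in H^{1+\delta}$ into $[P_{k-1}]^d$ and of $u \in H^{2+\delta}$ into $P_k$, and bound $\mathcal{J}_h(\Pi_W u,\Pi_V\Bq)$ termwise. Using $f = A:\nabla\Bq$, the residual term equals $\|A:\nabla(\Pi_V\Bq - \Bq)\|_{\Ct_h}^2 \le \|A\|_{\infty}^2\|\nabla(\Pi_V\Bq - \Bq)\|_{\Ct_h}^2 = O(h^{2\min\{k-1,\delta\}}\|u\|_{2+\delta,2,\Omega}^2)$; the mismatch term is $h^{-2}\|(\Pi_V\Bq - \Bq) + \nabla(u - \Pi_W u)\|_{\Ct_h}^2$, and since both interpolation errors are $O(h^{\min\{k,1+\delta\}})$ this is again $O(h^{2\min\{k-1,\delta\}})$. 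For the $L^1$ penalty I use that $\Bq\cdot\Bn$ is single-valued on interior faces and $u = 0$ on $\partial\Omega$, so each jump reduces to a jump of the interpolation error; a Cauchy--Schwarz step converting the $L^1$ face norms to $L^2$, followed by scaled trace inequalities, gives $\tau\,\Cs(\Pi_V\Bq,\Pi_W u) = O(h^{2\min\{k-1,\delta\}})$, with the $\tau$- and $\|f\|_\Omega$-dependence collected into $\mathcal{C}_f$. Combined with $\mathcal{J}_h(u_h,\Bq_h) \le \mathcal{J}_h(\Pi_W u,\Pi_V\Bq)$ this bounds the functional at the discrete solution at the stated rate.

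The crux is the converse (coercivity) bound $\|\Be_q\|_{\Ct_h} \lesssim \sqrt{\mathcal{J}_h(u_h,\Bq_h)} + (\text{interpolation terms})$, from which $\|\nabla e_u\|_{\Ct_h}$ follows by the identities above and $\|e_u\|_{\Omega}$ by a discrete Poincar\'e inequality using the penalized boundary jumps. To estimate $\|\Be_q\|$ I would pass to conforming reconstructions $u^c_h \in W^c_h$ and $\Bq^c_h \in \BV^c_h$, whose broken-norm distances to $u_h,\Bq_h$ are controlled by the $L^1$ jump penalties, and apply \eqref{regularity_NonDiv} to $u - u^c_h \in H^1_0(\Omega)$. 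The main obstacle is precisely this application: \eqref{regularity_NonDiv} requires $A:D^2(u - u^c_h) \in L^2(\Omega)$, yet a merely $H^1_0$-conforming piecewise polynomial $u^c_h$ has gradient jumps across faces, so its distributional Hessian is not in $L^2$. The mixed formulation rescues this through the independent variable $\Bq_h$, whose element-wise gradient $\nabla\Bq_h$ plays the role of the discrete Hessian and is genuinely $L^2$: one replaces $D^2 u^c_h$ by $\nabla\Bq_h$, controlling the defect via $\nabla(\Bq_h - \nabla u_h)$ --- bounded by $h^{-1}\|\Bq_h - \nabla u_h\|_{\Ct_h} \le \sqrt{\mathcal{J}_h}$ through an inverse inequality (this is the reason for the $h^{-2}$ weight) --- together with the normal-jump penalty on $\Bq_h$, while $A:\nabla\Bq_h - f$ is exactly the first residual. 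In effect one proves a broken-regularity inequality $\|\Bq^c_h\|_{\Ct_h} \lesssim \|A:\nabla\Bq_h\|_{\Ct_h} + h^{-1}\|\Bq_h - \nabla u_h\|_{\Ct_h} + (\text{jumps})$ by reducing, via the reconstructions, to \eqref{regularity_NonDiv} and absorbing the face contributions into the penalty. I expect the careful tracking of the $L^1$ penalty scalings --- and with it the emergence of $\mathcal{C}_f$ --- to be the most delicate bookkeeping.

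Combining the three steps yields \eqref{main_result1}. The $H^2$ estimate \eqref{main_result2} follows by the same route with \eqref{regularity_NonDiv} replaced by the stronger hypothesis \eqref{W2p_assumption} at $p=2$: that estimate upgrades the coercivity to control the full broken norm $\|\,\cdot\,\|_{2,2,\Ct_h}$, equivalently $\|\nabla\Be_q\|_{\Ct_h}$, and since $D^2 e_u$ differs from $\nabla\Be_q$ only by $\nabla(\Bq_h - \nabla u_h)$, whose norm is again $\le h^{-1}\|\Bq_h - \nabla u_h\|_{\Ct_h} \le \sqrt{\mathcal{J}_h}$ by an inverse inequality, the element-wise Hessian error is controlled at the same rate.
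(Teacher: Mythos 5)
Your overall architecture (minimality of $(u_h,\Bq_h)$, a consistency bound at an interpolant, and a stability estimate driven by \eqref{regularity_NonDiv}) matches the paper's, but the step you yourself call ``the crux'' contains a genuine gap, and your proposed rescue does not work. The hypothesis \eqref{regularity_NonDiv} is a statement about a single function $w\in H^1_0(\Omega)$ and \emph{its own} distributional Hessian: it may only be invoked when $A:D^2w\in L^2(\Omega)$. For the $H^1$-conforming reconstruction $u^c_h$ the distributional Hessian is not an $L^2$ function at all --- it contains measures supported on interior faces, coming from the jumps of $\nabla u^c_h$ --- so the hypothesis fails, and ``replacing $D^2u^c_h$ by $\nabla\Bq_h$'' is not admissible: one cannot substitute a different piecewise field for the Hessian inside an a priori estimate for the PDE operator. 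The ``broken-regularity inequality'' you posit is exactly what must be proved, and it cannot be obtained by ``absorbing the face contributions into the penalty,'' because those contributions are singular distributions, not $L^2$ terms. The paper's resolution is a second lifting that your proposal is missing: from $u^c_h$ one constructs $\widetilde u^c_h\in H^2(\Omega)\cap H^1_0(\Omega)$ (Lemma \ref{aux_2}, the Brenner--Sung construction) satisfying $\|D^l(u^c_h-\widetilde u^c_h)\|^2_\Omega\le Ch^{3-2l}\|\lb\nabla u^c_h\cdot\Bn\rb\|^2_{\Cf^I_h}$; the regularity estimate is then legitimately applied to $u-\widetilde u^c_h$, and $\|A:D^2\widetilde u^c_h-f\|_\Omega$ is split into $\|A:\nabla\Bq_h-f\|_{\Ct_h}$ plus three defects ($\nabla\Bq_h$ vs.\ $\nabla\Bq^c_h$, $\nabla\Bq^c_h$ vs.\ $D^2u^c_h$, $D^2u^c_h$ vs.\ $D^2\widetilde u^c_h$), each controlled by the functional through Lemma \ref{aux_3} and inverse inequalities. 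Without an $H^2\cap H^1_0$ reconstruction of this kind there is no route from \eqref{regularity_NonDiv} to the discrete solution, and the same lifting is what makes your $H^2$ claim \eqref{main_result2} provable.

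Two further points where the $L^1$ structure of the penalty is mishandled. First, the stability argument needs \emph{squared} jump quantities such as $h^{-2}\|\lb\Bq_h\cdot\Bn\rb\|^2_{1,\Cf^I_h}$ and $h^{-4}\|\lb u_h\rb\|^2_{1,\Cf_h}$, while $\mathcal{J}_h$ contains only first powers of the $L^1$ face norms. The paper bridges this with Lemma \ref{aux_1}: by minimality $\tau\Cs(u_h,\Bq_h)\le\mathcal{J}_h(0,\boldsymbol{0})=\|f\|^2_\Omega$, whence the squared quantities are bounded by $2\tau^{-1}\|f\|^2_\Omega\,\Cs(u_h,\Bq_h)$; this is exactly the mechanism that produces $\mathcal{C}_f=\max(\tau^{-2}\|f\|^2_\Omega,1)$. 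Your proposal treats the penalty as if it were quadratic and never supplies this step, so the constant you announce is not accounted for. Second, your consistency claim $\tau\,\Cs(\BPi_V\Bq,\Pi_W u)=O(h^{2\min\{k-1,\delta\}})$ is false for genuinely discontinuous interpolants: since $\Cs$ is a \emph{first} power of $L^1$ face norms with weights $h^{-1}$ and $h^{-2}$, a trace/Cauchy--Schwarz computation yields only $O(h^{\min\{k-2,\,\delta-1\}})$, which for $k=2$ does not even tend to zero and for $\delta<1$ diverges. The consistency step works only because one can take \emph{conforming} interpolants --- the paper uses the Lagrange interpolant in $W^c_h$ and the BDM projection in $\BV^c_h$ --- for which $\Cs$ vanishes identically. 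If your ``Scott--Zhang-type'' interpolants were meant to be conforming, then the jumps are exactly zero and there is nothing to estimate; the estimate you wrote down is not a correct substitute.
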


\begin{remark}
Here we only assume the exact solution has the strong $H^{2+\delta}$ regularity. The linear non-divergence operator 
is only assumed to have a weak $H^1$ regularity (\ref{regularity_NonDiv}) for the energy norm error estimate. In addition if we have full $H^2$ regularity for the nondivergence operator, we then obtain the optimal convergence under the discrete $H^2$ norm. 
\end{remark}

Moreover, we can establish the error estimate with the well-known Aleksandrov-Bakelman-Pucci 
estimates (ABP) \cite[Chapter 9, Theorem 9.1]{GilbargTrudinger1983}: 
If $w \in C^0(\bar{\Omega}) \cap W^{2,d}_{loc}(\Omega)\cap H_{0}^{1}(\Omega)$, then it holds:
\begin{equation}
\label{ABP_est}
\| w \|_{\infty, \Omega} \leq C \| A:D^{2}w \|_{d, \Omega}.
\end{equation}
The above estimate holds if the domain $\Omega \in \mathbb{R}^d$ $(d=2,3)$ is an open and bounded polytope,  and the coefficient matrix 
$A = A(x) \in [L^{\infty}(\Omega)]^{d\times d}$ is a symmetric positive definite matrix with eigenvalues 
bounded between $(0, \Lambda)$ with $\Lambda > 0$.

\begin{theorem}
\label{Linf_error}
Let $(u,\Bq)$ be the exact solution of the problem \eqref{non_div} and $(u_h, \Bq_h)$ be a  minimizer of 
\eqref{mini_h}. If $u \in H^{2+\delta}(\Omega)$ with $\delta > 0$, then  the following statements hold:

(i) In the 2D case with 
any polynomial degree $k \ge 2$ we have:
\begin{align*}
\|u - u_h\|_{\infty, \Omega} &\le C  \mathcal{C}_f
h^{\min\{k-1,\delta \}}\|u\|_{\min\{k+1,2+\delta\},2,\Omega}.
\end{align*}

(ii) In the 3D case with $k=2, 3$, $\delta > \frac{1}{2}$ and $A$ is piecewise polynomial with respect to $\Ct_h$, 
we have:
\[
\|u - u_h\|_{\infty, \Omega} \le C  \mathcal{C}_f
h^{\min\{k-1,\delta\} -\frac12} \|u\|_{\min\{k+1,2+\delta\},2,\Omega}.
\]
\end{theorem}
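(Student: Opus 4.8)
The plan is to use the Aleksandrov--Bakelman--Pucci estimate \eqref{ABP_est} as the fundamental $L^\infty$-stability mechanism, converting the pointwise error into an $L^d$-control of $A:D^2$ applied to a conforming, regular proxy of the error, and then to feed in the energy-norm bound of Theorem~\ref{main_error}. Since $u_h$ is discontinuous and only piecewise smooth it fails the hypotheses $C^0(\bar\Omega)\cap W^{2,d}_{loc}(\Omega)\cap H^1_0(\Omega)$ of \eqref{ABP_est}, so first I would introduce a recovery $v_h \in H^2(\Omega)\cap H^1_0(\Omega)$ approximating $u_h$ (obtained by an enriching/averaging operator $E_h:W_h\to W_h^c$ followed by a smoothing step, together with a companion $H(\text{div};\Omega)$-conforming recovery of $\Bq_h$). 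Writing
$$\|u - u_h\|_{\infty,\Omega} \le \|u - v_h\|_{\infty,\Omega} + \|v_h - u_h\|_{\infty,\Omega},$$
the second term is handled by a finite-dimensional inverse inequality together with the standard bound of $\|v_h - u_h\|$ by the jump seminorms $\|\lb u_h\rb\|$ and $\|\lb\Bq_h\cdot\Bn\rb\|$, which are exactly the quantities appearing in the penalty $\Cs$ and already controlled in the proof of Theorem~\ref{main_error}.

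For the first term I would apply \eqref{ABP_est} to $w := u - v_h$, which now lies in the admissible class because $u\in H^{2+\delta}\hookrightarrow W^{2,d}$ and $v_h\in H^2\cap H^1_0$. Using $A:D^2 u = f$, I split
$$A:D^2 w = (f - A:\nabla\Bq_h) + A:(\nabla\Bq_h - D^2 v_h).$$
The first group is the PDE residual, controlled in $L^2$ by $\sqrt{\mathcal{J}_h(u_h,\Bq_h)}$; the second measures the mismatch between the recovered gradient $\Bq_h$ and the Hessian of $v_h$, which is absorbed by the $h^{-2}\|\Bq_h - \nabla u_h\|^2$ term and the jump penalties inside $\mathcal{J}_h$. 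Combining these with the $L^\infty$ interpolation estimate for $\|u - v_h\|_{\infty,\Omega}$ reduces everything to bounding $\|A:D^2 w\|_{d,\Omega}$.

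This is where the dimensional split enters. In two dimensions $d=2$, so the ABP norm is precisely $L^2$, and $\|A:D^2 w\|_{2,\Omega}$ is controlled directly by the energy quantities already estimated at rate $h^{\min\{k-1,\delta\}}$ in Theorem~\ref{main_error}; no loss occurs, giving part (i). In three dimensions $d=3$ the estimate demands the stronger $L^3$-norm. I would pass from the available $L^2$-bounds to $L^3$ by the elementwise inverse inequality $\|v\|_{3,K}\le C h_K^{-d(1/2-1/3)}\|v\|_{2,K} = C h_K^{-1/2}\|v\|_{2,K}$; this is exactly where the hypothesis that $A$ is piecewise polynomial is used, since it guarantees $A:\nabla\Bq_h$ is piecewise polynomial so the inverse inequality applies, and it produces the extra factor $h^{-1/2}$. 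The restriction $\delta>\tfrac12$ is also needed here: the Sobolev embedding $H^{\delta}\hookrightarrow L^3$ in $\mathbb{R}^3$ holds precisely when $\delta\ge\tfrac12$, which is what lets the interpolation error of $D^2 u$ be measured in $L^3$. These two effects account for the rate $h^{\min\{k-1,\delta\}-\frac12}$ in part (ii).

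The main obstacle I anticipate is the reconciliation of the nonconforming, merely piecewise-smooth discrete solution with the $C^0\cap W^{2,d}_{loc}\cap H^1_0$ hypotheses of \eqref{ABP_est}: constructing the $H^2$-regular proxy $v_h$ so that $A:D^2 v_h - f$ splits cleanly into an energy-controlled residual and jump terms, and in particular ensuring that the recovery of $\Bq_h$ (which must be compared to $D^2 v_h$, costing one derivative) does not degrade the rate, is the delicate part. The sharp bookkeeping of the $L^2\to L^3$ inverse estimate in the three-dimensional case — keeping the loss to exactly $h^{-1/2}$ and no worse — is the second technical hurdle, and is the reason for the additional structural hypotheses ($k=2,3$, $A$ piecewise polynomial, $\delta>\tfrac12$) in part (ii).
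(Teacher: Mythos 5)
Your 2D argument is essentially sound and close in spirit to the paper's: build a conforming $H^2(\Omega)\cap H^1_0(\Omega)$ proxy of $u_h$, control the discrete-to-conforming discrepancies by inverse inequalities and the jump/penalty terms already bounded in Theorem \ref{main_error}, and invoke the ABP estimate; since $d=2$ makes the ABP right-hand side an $L^2$ norm, every term is directly controlled by $\mathcal{J}_h(u_h,\Bq_h)$ and the claimed rate follows.

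The 3D case, however, has a genuine gap. You apply ABP to $w = u - v_h$ and must then bound $\|A:D^2 w\|_{3,\Omega}$, which after your splitting contains the residual $f - A:\nabla\Bq_h$. The minimization only controls this residual in $L^2$, and your proposed remedy --- the elementwise inverse inequality costing $h^{-1/2}$ --- does not apply to it, because $f$ is a general function, not a piecewise polynomial (only $A:\nabla\Bq_h$ is). The same problem afflicts $A:(\nabla\Bq_h - D^2 v_h)$ unless $v_h$ is itself a piecewise polynomial, which your ``averaging followed by smoothing'' construction does not guarantee. The paper's proof is arranged precisely to avoid this: ABP is applied not to $u - v_h$ but to the purely discrete difference $\widetilde{\Pi}^c_h u - \widetilde{u}^c_h$ of two functions in the $C^1$ Hsieh--Clough--Tocher macroelement space (the Lagrange-then-HCT interpolant of $u$, and the HCT lifting of $u^c_h$). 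Both are piecewise polynomials, and with $A$ piecewise polynomial the inverse inequality applies to the entire quantity $A:D^2(\widetilde{\Pi}^c_h u - \widetilde{u}^c_h)$; only after that does one use the PDE $A:D^2 u = f$ to split into $\|A:D^2(u - \widetilde{\Pi}^c_h u)\|_{\Omega}$ and $\|A:D^2\widetilde{u}^c_h - f\|_{\Omega}$, both genuine $L^2$ quantities. The term $u - \widetilde{\Pi}^c_h u$ is never fed through ABP at all; it is bounded in $L^\infty$ via the embedding $H^2 \hookrightarrow L^\infty$ and Lemma \ref{aux_u}. Two of your attributions of hypotheses are also off: the restriction $k=2,3$ in 3D comes from the availability of the HCT macroelement lifting of \cite{Neilan19} only for those degrees (the virtual-element lifting of Lemma \ref{aux_2} exists for all $k$ but does not produce piecewise polynomials, so it is useless here), not from the $L^2\to L^3$ bookkeeping; and $\delta > \frac{1}{2}$ is needed simply so that the final rate $h^{\min\{k-1,\delta\}-\frac12}$ is positive, not for an embedding $H^{\delta}\hookrightarrow L^3$ --- the paper never measures $D^2 u$ in $L^3$.
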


\begin{remark}
In the 3D case, we make two additional restrictions, one  on the the polynomial degree $k = 2,3$ and the other on the coefficient matrix $A$. This is due to the use of the discrete inverse inequality in the analysis. Nevertheless, for the 2D case, this requirement is not needed.
\end{remark}

\section{Proofs of the error estimates}
This section is devoted to providing the proof of the error estimates stated in the last section. We begin by gathering several auxiliary results to be used in the proof.

\begin{lemma}\label{aux_1}
If $(u_h,\Bq_h)$ is a minimizer of $\mathcal{J}_h$ with $\tau > 0$, then
\[
\tau \Cs(u_h, \Bq_h) \le \|f\|^2_{\Omega}
\]
and
\[
 \tau ( h^{-2}\|\lb \Bq_h \cdot \Bn \rb\|^2_{1,\Cf^I_h} + h^{-4}\|\lb u_h \rb\|^2_{1,\Cf_h}) \le 2 \|f\|^2_{\Omega} \mathcal{S}(u_h, \Bq_h).
\]
\end{lemma}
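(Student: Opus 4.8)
The plan is to exploit the minimality of $(u_h,\Bq_h)$ by comparing the energy functional against a single, trivially chosen competitor, and then to extract both estimates from the resulting bound together with the nonnegativity of the individual terms of $\mathcal{J}_h$. Since $\mathcal{J}_h(u_h,\Bq_h) \le \mathcal{J}_h(w,\Bv)$ for every admissible pair $(w,\Bv)\in W_h\times\BV_h$, I would first test against $(w,\Bv)=(0,\mathbf{0})$. For this choice all three ingredients of $\mathcal{J}_h$ simplify: the residual term becomes $\|A:\nabla\mathbf{0}-f\|^2_{\Ct_h}=\|f\|^2_{\Ct_h}$, the consistency term $h^{-2}\|\mathbf{0}-\nabla 0\|^2_{\Ct_h}$ vanishes, and the penalty $\Cs(\mathbf{0},0)$ vanishes because all jumps of the zero function are zero. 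Using that the broken norm of the fixed datum satisfies $\|f\|^2_{\Ct_h}=\|f\|^2_{\Omega}$ (the elements partition $\Omega$), this yields $\mathcal{J}_h(u_h,\Bq_h)\le\|f\|^2_{\Omega}$.

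The first inequality is then immediate: each of the three summands defining $\mathcal{J}_h$ is nonnegative, so in particular $\tau\,\Cs(u_h,\Bq_h)\le\mathcal{J}_h(u_h,\Bq_h)\le\|f\|^2_{\Omega}$. For the second inequality I would split the penalty into its two nonnegative additive pieces, $a:=h^{-1}\|\lb\Bq_h\cdot\Bn\rb\|_{1,\Cf^I_h}$ and $b:=h^{-2}\|\lb u_h\rb\|_{1,\Cf_h}$, so that $\Cs(u_h,\Bq_h)=a+b$ while the left-hand side of the target estimate is exactly $a^2+b^2$. The elementary bound $a^2+b^2\le(a+b)^2$, valid since $a,b\ge 0$, combined with the first inequality $\tau(a+b)\le\|f\|^2_{\Omega}$, gives $\tau(a^2+b^2)\le\tau(a+b)^2=[\tau(a+b)](a+b)\le\|f\|^2_{\Omega}(a+b)=\|f\|^2_{\Omega}\,\Cs(u_h,\Bq_h)$, which already delivers the claim with a constant $1$, hence a fortiori with the stated factor $2$.

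The argument is essentially mechanical, and no regularity of $u$, $A$, or of the mesh enters. The only point that requires a little bookkeeping is the very last step: one must recognize that the squared face-norms appearing on the left of the second estimate are precisely the squares of the two summands $a$ and $b$ of the penalty term, so that the scalar inequality $a^2+b^2\le(a+b)^2$ applies verbatim. I do not expect any genuine obstacle here; the lemma is a clean consequence of optimality plus term-by-term nonnegativity, and its role is to record an a priori bound on the jumps that will feed the consistency and error analysis later.
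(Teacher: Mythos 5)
Your proposal is correct and follows essentially the same route as the paper: compare the minimizer against the zero competitor $(0,\mathbf{0})$ to get $\mathcal{J}_h(u_h,\Bq_h)\le\|f\|^2_{\Omega}$, use term-by-term nonnegativity for the first bound, and deduce the second from the first. The paper merely asserts the second inequality is "a direct consequence of the first"; your explicit argument via $a^2+b^2\le(a+b)^2$ fills in that step and in fact yields the sharper constant $1$ in place of $2$.
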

\begin{proof}
Since $(u_h,\Bq_h)$ is a minimizer of $\mathcal{J}_h$, we observe that
\[
\tau \Cs(u_h, \Bq_h) \le \mathcal{J}_h(u_h,\Bq_h) \le  \mathcal{J}_h(0,{\bf{0}}) = \|f\|^2_{\Omega}.
\]
The second inequality is a direct consequence of the first assertion.
\end{proof}

For discrete functions, we frequently use the following inverse and trace inequalities \cite{BrennerScott08}: For $w_h \in W(D)$ with $D \in \mathbb{R}^d$, we have that
\begin{align}\label{inverse_1}
\|w_h\|_{l,p,D} &\le C h_K^{m-l + \frac{d}{p} - \frac{d}{q}} \|w_h\|_{m,q,D}, \quad \text{for all $K \in \Ct_h$,}   \quad 0 \le m \le l, 1 \le p,q \le \infty, \\
\label{trace_1}
\|w_h\|_{\partial K} &\le C h^{-\frac12}_K \|w_h\|_K, \quad \text{for all $K \in \Ct_h$.} 
\intertext{The global version of the above inequalities are as follows: for $w_h \in W_h$ it holds}
\label{inverse_3}
\|w_h\|_{l,p,\Cf_h} &\le C h^{m-l + \min\{0, \frac{d}{p} - \frac{d}{q}\}} \|w_h\|_{m,q,\Cf_h}, \quad \text{for} \quad 0 \le m \le l, 1 \le p,q \le \infty,\\
\label{trace_2}
\|w_h\|_{\Cf_h} &\le C h^{-\frac12} \|w_h\|_{\Ct_h},\\
\label{trace_3}
\|\lb w_h \rb \|_{\Cf_h} &\le C h^{-\frac12} \|w_h\|_{\Ct_h}.
\end{align}

{{In the error estimates, the essential step is to bridging the discontinuous numerical solution $(u_h,\Bq_h) \in W_h \times \BV_h$ with the exact solution $(u,\Bq) \in H^1(\Omega) \times H(\text{div};\Omega)$ so that we can apply the regularity of the PDE \eqref{regularity_NonDiv} on the error. To this end, we use several lifting operators on discrete functions as follows.}}

The first lifting operator is to construct a $u^c_h \in W^c_h$ such that their difference is controlled by the jumps of $u_h$.  Namely, for an arbitrary $w_h \in W_h$, we can use the averaging operator introduced in \cite{KarakashianPascal07} to define $w^c_h$ such that $w^c_h \in W^c_h$ and it satisfies the property that \cite{KarakashianPascal07}[Theorem 2.1]:
\begin{equation*}
\|w_h - w^c_h\|^2_{\Ct_h} + h^2 \|\nabla (w_h - w^c_h)\|^2_{\Ct_h} \le C h\|\lb w_h \rb\|^2_{\Cf_h}.
\end{equation*}
With the inverse inequality \eqref{inverse_3} for discrete functions, we have the following result:
\begin{equation}\label{lifting_u}
\|w_h - w^c_h\|^2_{\Ct_h} + h^2 \|\nabla (w_h - w^c_h)\|^2_{\Ct_h} \le C \|\lb w_h \rb\|^2_{1,\Cf_h}.
\end{equation}
Likewise, for vector-valued functions $\Bv_h \in \BV_h$, we can construct a $H(\text{div})$ conforming function $\Bv_h^c \in \BV_h \cap H(\text{div};\Omega)$ such that the following approximation property is true:
 \begin{equation}\label{lifting_q}
\|\Bv_h - \Bv^c_h\|^2_{\Ct_h} + h^2 \|\nabla (\Bv_h - \Bv^c_h)\|^2_{\Ct_h} \le C \|\lb \Bv_h \cdot \Bn \rb\|^2_{1,\Cf^I_h}.
\end{equation}

{{Notice that $u^c_h \in W^c_h$ is only in $H^1(\Omega)$ which is not sufficient for the regularity condition $w \in H^1_0(\Omega), D^2 w \in L^2(\Omega)$. Hence, we need further lift the function to a function in $H^2(\Omega) \cap H^1_0(\Omega)$. We will need two different lifting operators for the $H^1$ and $L^{\infty}$ error estimates respectively.}}

The first one is due to \cite{BrennerSung2019}. They established the following result:

\begin{lemma}\label{aux_2}
For any function $w^c_h \in W^c_h$, there exists $\widetilde{w}^c_h \in H^{2}(\Omega) \cap H^1_0(\Omega)$ such that:
\[
\|D^l (w^c_h - \widetilde{w}^c_h)\|^2_{\Omega} \le C h^{3-2l} \|\lb \nabla w^c_h \cdot \Bn \rb\|^2_{\Cf^I_h}, \quad l=0,1,2.
\]
\end{lemma}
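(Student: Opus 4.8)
The plan is to realize $\widetilde{w}^c_h$ as the image of $w^c_h$ under an enrichment (averaging) operator mapping into a standard $H^2$-conforming macroelement space, in the spirit of Brenner and Sung \cite{BrennerSung2019}. First I would fix an $H^2$-conforming finite element space $\widetilde{W}_h \subset H^2(\Omega) \cap H^1_0(\Omega)$ built on the same mesh $\Ct_h$ --- for example the reduced Hsieh--Clough--Tocher element in two dimensions and an analogous $C^1$ macroelement in three dimensions --- chosen so that its local space on each $K$ contains $P_k(K)$ and its degrees of freedom consist only of point evaluations of $w$ and of $\nabla w$ at the vertices together with normal-derivative functionals at face (edge) midpoints. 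The crucial structural point is that every such functional is a derivative of order at most one, so no second-order data of $w^c_h$ ever enters the construction; this is precisely what will keep only $\lb \nabla w^c_h \cdot \Bn \rb$ on the right-hand side.

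Next I would define the averaging operator $E_h : W^c_h \to \widetilde{W}_h$ nodewise: for each global degree of freedom $N$ of $\widetilde{W}_h$, set $N(E_h w^c_h)$ equal to the average of $N(w^c_h|_K)$ over the elements $K$ sharing the node associated with $N$, and put $\widetilde{w}^c_h := E_h w^c_h$. The boundary degrees of freedom are set to zero, which is consistent since $w^c_h = 0$ on $\partial\Omega$ forces the boundary values and tangential derivatives to vanish; this guarantees $\widetilde{w}^c_h \in H^1_0(\Omega)$.

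The heart of the estimate is a local bound on each $K$. Both $w^c_h|_K$ and $\widetilde{w}^c_h|_K$ lie in the finite-dimensional local space, and the degrees of freedom of their difference are exactly the discrepancies $N(w^c_h|_K) - N(w^c_h|_{K'})$ across neighboring elements $K'$. Mapping to a reference element, using equivalence of norms on the local space and the scaling of $D^l$, I obtain
\[
\|D^l(w^c_h - \widetilde{w}^c_h)\|_K^2 \le C \sum_{N} h_K^{\,d - 2l + 2 s_N}\, \big| N(w^c_h|_K) - N(w^c_h|_{K'}) \big|^2,
\]
where $s_N \in \{0,1\}$ is the differentiation order of $N$. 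Because $w^c_h$ is globally continuous, the value discrepancies ($s_N = 0$) vanish; and for the first-order functionals the gradient jump of a continuous function has no tangential component, so $N(w^c_h|_K) - N(w^c_h|_{K'})$ collapses to a pointwise value of $\lb \nabla w^c_h \cdot \Bn \rb$ on the shared face $F$. A discrete inverse inequality on the face polynomial then bounds this pointwise value by $h_F^{-(d-1)/2}\|\lb \nabla w^c_h \cdot \Bn \rb\|_F$ up to constants.

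Finally I would sum the local bounds over $K \in \Ct_h$, collect the contribution of each interior face, and track the powers of $h$: the face-norm factor carries $h^{-(d-1)}$, which combines with the element factor $h^{\,d - 2l + 2}$ (here $s_N = 1$) to leave $h^{3-2l}$, yielding
\[
\|D^l(w^c_h - \widetilde{w}^c_h)\|_\Omega^2 \le C\, h^{3-2l}\,\|\lb \nabla w^c_h \cdot \Bn \rb\|_{\Cf^I_h}^2, \quad l = 0,1,2.
\]
The main obstacle is the bookkeeping in the local step: one must verify that, because $w^c_h \in H^1_0(\Omega)$ is continuous, every degree-of-freedom discrepancy either vanishes or reduces to a normal-derivative jump (telescoping around shared vertices and edges to discard tangential components), and then track the scaling powers precisely through the reference map and the face inverse inequality. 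Choosing a macroelement whose vertex and face functionals involve only first derivatives is what makes this reduction clean; with second-derivative nodal functionals (as in the full Argyris element) one would pick up Hessian jumps absent from the right-hand side, so the choice of $\widetilde{W}_h$ is not incidental.
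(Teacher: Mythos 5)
Your overall machinery --- a nodal averaging (enriching) operator into a $C^1$-conforming space, norm equivalence via degrees of freedom, telescoping the gradient discrepancies of a continuous function into normal-derivative jumps, and reference-element scaling plus a face inverse inequality --- is the standard argument for results of this type, and in two dimensions it can indeed be carried through. However, there is a genuine gap relative to the lemma as stated: the lemma claims the existence of $\widetilde{w}^c_h$ for \emph{every} $w^c_h \in W^c_h$, i.e.\ for arbitrary polynomial degree $k \ge 2$ and for $d = 2,3$. Your construction requires an $H^2(\Omega)$-conforming macroelement space on $\Ct_h$ whose local space contains $P_k(K)$ and whose degrees of freedom involve only zeroth- and first-order derivatives. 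Such spaces (of Hsieh--Clough--Tocher type) exist for all $k \ge 2$ when $d=2$, but in three dimensions they are only available for $k = 2,3$; this is exactly the obstruction the paper flags when it remarks that ``the Hsieh--Clough--Tocher macroelement lifting is only defined for $k=2,3$ in 3D.'' For this reason the paper does not prove the lemma with macroelements at all: it invokes the \emph{virtual} enriching operator of Brenner and Sung \cite{BrennerSung2019}, whose target is a conforming virtual element space (in general not piecewise polynomial), precisely because that construction is degree- and dimension-independent. Your route is the one the paper itself uses later, via \cite{Neilan19}, but only for the $L^\infty$ estimate, where the restriction to $k=2,3$ in 3D is accepted as a hypothesis and where a genuinely polynomial lifting is needed so that inverse inequalities apply. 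So your proposal proves a weaker statement than Lemma \ref{aux_2}; to get the full claim you would have to replace the macroelement space by the virtual element space and rerun the same averaging argument there.

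A second, more local flaw: you set \emph{all} boundary degrees of freedom of $\widetilde{w}^c_h$ to zero. The value and tangential-derivative functionals on $\partial\Omega$ do vanish for $w^c_h \in H^1_0(\Omega)$, so zeroing them is consistent; but the normal-derivative functionals on boundary faces do not vanish, and zeroing them creates discrepancies of size $|\nabla w^c_h \cdot \Bn|$ at boundary nodes that are \emph{not} controlled by the right-hand side of the lemma, which involves only the interior faces $\Cf^I_h$. The fix is to assign the boundary normal-derivative degrees of freedom their one-sided values from $w^c_h$ itself --- the target space only needs to lie in $H^2(\Omega)\cap H^1_0(\Omega)$, not in $H^2_0(\Omega)$, so nothing forces those functionals to vanish --- after which these boundary discrepancies are identically zero and the summation over faces closes with the stated power $h^{3-2l}$.
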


It is worth to mention that here the construction of $\widetilde{w}^c_h$ is done in the context of conforming virtual element spaces. This leads to the fact that $\widetilde{w}^c_h$ in general is not a piecewise polynomial in $W_h$. Nevertheless, our analysis for the first error estimate does not require the lifted function in the discrete space $W_h$. 

The next auxiliary result is on the relation between $(w_h, \Bv_h)$ and its conforming counterpart $(w^c_h, \Bv^c_h)$.
%{\bf SK: It is not clear what is a lifting of a discrete function. It seems that we begin discussing it in (3.7). I recommend that we motivate the idea. Is it well-known?}
\begin{lemma}\label{aux_3}
%For any function $(w_h, \Bv_h) \in W_h \times \BV_h$, let 
If $(w^c_h, \Bv^c_h) \in W^c_h \times \BV^c_h$ is the lifting of  $(w_h, \Bv_h) \in W_h \times \BV_h$, then 
\begin{align*}
\|\lb \nabla w^c_h \cdot \Bn \rb\|^2_{\Cf_h^I} &\le C h^{-1}\|\Bv^c_h - \nabla w^c_h\|^2_{\Ct_h}, \\
\|\Bv^c_h - \nabla w^c_h\|^2_{\Ct_h} &\le C(\|\Bv_h - \nabla w_h\|^2_{\Ct_h} +  \|\lb \Bv_h \cdot \Bn \rb\|^2_{1,\Cf^I_h} + h^{-2}\|\lb w_h \rb\|^2_{1,\Cf_h}).
\end{align*} 
\end{lemma}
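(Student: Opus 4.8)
The plan is to establish both estimates by elementary means, relying on the triangle inequality, the two lifting estimates \eqref{lifting_u} and \eqref{lifting_q}, the discrete trace inequalities \eqref{trace_2}--\eqref{trace_3}, and one structural observation about the conforming spaces. The only genuinely non-routine ingredient is the use of the $H(\text{div})$-conformity of $\Bv^c_h$ in the first bound; everything else is bookkeeping.

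For the first inequality, I would start from the observation that, since $\Bv^c_h \in \BV^c_h = \BV_h \cap H(\text{div};\Omega)$, its normal component is single-valued across interior faces, so $\lb \Bv^c_h \cdot \Bn \rb = 0$ on every $F \in \Cf_h^I$. Inserting this vanishing jump, I rewrite $\lb \nabla w^c_h \cdot \Bn \rb = \lb (\nabla w^c_h - \Bv^c_h) \cdot \Bn \rb$ on interior faces. Since $\abs{\Bg \cdot \Bn} \le \abs{\Bg}$ pointwise, the normal jump is controlled by the two elemental traces of the discrete vector field $\Bg := \nabla w^c_h - \Bv^c_h$, whence $\|\lb \nabla w^c_h \cdot \Bn \rb\|_{\Cf_h^I} \le C\|\nabla w^c_h - \Bv^c_h\|_{\Cf_h}$. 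Applying the discrete trace inequality \eqref{trace_2} to $\Bg$ and squaring then yields the claimed estimate with the $h^{-1}$ factor.

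For the second inequality, I would split
\[
\Bv^c_h - \nabla w^c_h = (\Bv^c_h - \Bv_h) + (\Bv_h - \nabla w_h) + \nabla(w_h - w^c_h),
\]
apply the triangle inequality together with $(a+b+c)^2 \le 3(a^2+b^2+c^2)$, and bound each piece in turn. The middle term $\|\Bv_h - \nabla w_h\|^2_{\Ct_h}$ already appears on the right-hand side. The first term is controlled by the vector lifting estimate \eqref{lifting_q}, giving $\|\Bv^c_h - \Bv_h\|^2_{\Ct_h} \le C\|\lb \Bv_h \cdot \Bn \rb\|^2_{1,\Cf_h^I}$, while the third is controlled by the scalar lifting estimate \eqref{lifting_u}, which after dividing through by $h^2$ gives $\|\nabla(w_h - w^c_h)\|^2_{\Ct_h} \le Ch^{-2}\|\lb w_h \rb\|^2_{1,\Cf_h}$. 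Note that the $L^1$-type face norms produced by the two lifting estimates match the penalty quantities on the right-hand side exactly, so no further conversion is needed. Collecting the three bounds produces the stated inequality.

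I expect the main subtlety to be care rather than conceptual difficulty: one must verify that $\Bg = \nabla w^c_h - \Bv^c_h$ is indeed a piecewise polynomial lying in $\BV_h$, so that the discrete trace inequality applies. This holds because $w^c_h \in W^c_h$ has $\nabla w^c_h \in [P_{k-1}(K)]^d$ on each $K$ and $\Bv^c_h \in \BV_h$. Beyond this, the argument requires no regularity of the exact solution and no variational identity; it is purely a statement relating the discontinuous pair $(w_h,\Bv_h)$ and its conforming lifting $(w^c_h,\Bv^c_h)$ through the penalty terms.
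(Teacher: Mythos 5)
Your proposal is correct and follows essentially the same path as the paper's own proof: the first bound rests on the vanishing normal jump of $\Bv^c_h$ coming from $H(\text{div};\Omega)$-conformity followed by a discrete trace inequality, and the second uses exactly the three-term splitting with the lifting estimates \eqref{lifting_u} and \eqref{lifting_q}. The only cosmetic difference is that you invoke \eqref{trace_2} together with a pointwise bound on the jump, where the paper applies the jump trace inequality \eqref{trace_3} directly.
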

\begin{proof}
The first assertion is due to the fact that $\Bv_h^c \in \BV^c_h$ and the trace inequality \eqref{trace_3}:
\[
\|\lb \nabla w^c_h \cdot \Bn \rb\|^2_{\Cf_h^I} = \|\lb (\Bv^c_h - \nabla w^c_h) \cdot \Bn \rb\|^2_{\Cf_h^I} \le C h^{-1} \| (\Bv^c_h - \nabla w^c_h) \|^2_{\Ct_h}.
\]
For the second assertion we begin with triangle inequality:
\begin{align*}
\|\Bv^c_h - \nabla w^c_h\|^2_{\Ct_h} &\le C (\|\Bv_h - \nabla w_h\|^2_{\Ct_h} + \|\Bv^c_h - \Bv_h \|^2_{\Ct_h} + \|\nabla (w^c_h - w_h) \|^2_{\Ct_h}) \\
& \le C(\|\Bv_h - \nabla w_h\|^2_{\Ct_h} +  \|\lb \Bv_h \cdot \Bn \rb\|^2_{1,\Cf^I_h} + h^{-2}\|\lb w_h \rb\|^2_{1,\Cf_h}), 
\end{align*}
the last step is due to the approximation property of the lifting operators \eqref{lifting_u}, \eqref{lifting_q}.
\end{proof}

We are ready to present the proof of  Theorem \ref{main_error}:
\begin{proof}%{\bf of Theorem \ref{main_error}}
We begin by proving 
\begin{equation}
\label{aux_4}
\|u - u_h\|^2_{1,2,\Ct_h} \le C  \max ( \tau^{-2}\Vert f\Vert_{\Omega}^{2}, 1) \mathcal{J}_h(u_h, \Bq_h).
\end{equation}
To this end, we let $(u^c_h, \Bq^c_h) \in W^c_h \times \BV^c_h$ be the lifting of $(u_h, \Bq_h)$. In addition, we let $\widetilde{u}^c_h \in \widetilde{W}^c_{k+2}$ be the $C^1$-lifting of $u^c_h$, and set
$$
T_1:= C\|u - \widetilde{u}^c_h\|^2_{1,2,\Ct_h}, \  T_2:=C\|\widetilde{u}^c_h - u^c_h\|^2_{1,2,\Ct_h}, \ 
T_3:=C\|u^c_h - u_h\|^2_{1,2,\Ct_h}.
$$
We use the triangle inequality to obtain
\begin{align*}
\|u - u_h\|^2_{1,2,\Ct_h} \le T_1 + T_2 + T_3.
\end{align*}
By the approximation property of the lifting operator \eqref{lifting_u}, we have that
\[
T_3 \le C h^{-2} \|\lb u_h \rb\|^2_{1,\Cf_h}.
\]
Similarly, by Lemma \ref{aux_2}, Lemma \ref{aux_3} and inverse inequality \eqref{inverse_3}, we can bound $T_2$ as:
\begin{align*}
T_2 &\le C h \|\lb \nabla {u}^c_h \cdot \Bn \rb\|^2_{\Cf^I_h} \le C\|\Bq^c_h - \nabla u^c_h\|^2_{\Ct_h} \\
&\le C(\|\Bq_h - \nabla u_h\|^2_{\Ct_h} +  \|\lb \Bq_h \cdot \Bn \rb\|^2_{1,\Cf^I_h} + h^{-2}\|\lb u_h \rb\|^2_{1,\Cf_h})
\end{align*}
For $T_1$, we recall the regularity assumption \eqref{regularity_NonDiv} to have that
\begin{equation}\label{XXX}
    T_1 \le C \|A:D^2 \widetilde{u}^c_h - A:D^2 u\|^2_{\Omega} = C \|A:D^2 \widetilde{u}^c_h - f\|^2_{\Omega}.
\end{equation}
Introducing the notation
$$
T_{11}:=\|A:(\nabla \Bq_h - \nabla \Bq^c_h)\|^2_{\Ct_h}, \
T_{12}:=\|A:(\nabla \Bq^c_h - D^2 u^c_h)\|^2_{\Ct_h}, 
$$
and
$$
T_{13}:= \|A:(D^2 u^c_h- D^2 \widetilde{u}^c_h)\|^2_{\Ct_h}),
$$
by the triangle inequality, we further split the right hand side of equation \eqref{XXX} as
\begin{align*}
T_1 \le  C (\|A: \nabla \Bq_h - f\|^2_{\Ct_h} + T_{11} + T_{12} + T_{13}).
\end{align*}
For $T_{11}$, by the lifting property \eqref{lifting_q} we have:
\[
T_{11} \le C \|\nabla (\Bq_h - \Bq^c_h)\|^2_{\Ct_h} \le h^{-2} \|\lb \Bq_h \cdot \Bn \rb\|^2_{1,\Ce^I_h}.
\]
For $T_{12}$, with inverse inequality \eqref{inverse_3}, we have:
\begin{align*}
T_{12} &\le C \| \nabla \Bq^c_h - D^2 u^c_h\|^2_{\Ct_h} \le C h^{-2} \| \Bq^c_h - \nabla u^c_h\|^2_{\Ct_h} \\
\intertext{then with Lemma \ref{aux_3}, we further have:}
T_{12} & \le C(h^{-2}\|\Bq_h - \nabla u_h\|^2_{\Ct_h} + h^{-2} \|\lb \Bq_h \cdot \Bn \rb\|^2_{1,\Cf^I_h} + h^{-4}\|\lb u_h \rb\|^2_{1,\Cf_h})
\end{align*}
For $T_{13}$, by virtue of Lemma \ref{aux_2} we have:
\[
T_{13} \le C h^{-1} \| \lb \nabla u^c_h \cdot \Bn \rb \|^2_{\Ce_h^I}.
\]
We can bound the right side by Lemma \ref{aux_3} to have:
\[
T_{13} \le C h^{-2} (\|\Bq_h - \nabla u_h\|^2_{\Ct_h} +  \|\lb \Bq_h \cdot \Bn \rb\|^2_{1,\Cf^I_h} + h^{-2}\|\lb u_h \rb\|^2_{1,\Cf_h})
\]
Now if we combine the above estimates for $T_1, T_2, T_3$ we have:
\[
\|u - u_h\|^2_{1,2,\Ct_h} \le C ( \|A: \nabla \Bq_h - f\|^2_{\Ct_h} + h^{-2}\|\Bq_h - \nabla u_h\|^2_{\Ct_h} + h^{-2} \|\lb \Bq_h \cdot \Bn \rb\|^2_{1,\Cf^I_h} + h^{-4}\|\lb u_h \rb\|^2_{1,\Cf_h}).
\]
By virtue of Lemma \ref{aux_1}, we have:
\[
\|u - u_h\|^2_{1,2,\Ct_h} \le C ( \|A: \nabla \Bq_h - f\|^2_{\Ct_h} + h^{-2}\|\Bq_h - \nabla u_h\|^2_{\Ct_h} 
+ 2\tau^{-1} \|f\|^2_{\Omega} \Cs(u_h,\Bq_h)).
\]
We notice that 
\begin{align*}
& \|A: \nabla \Bq_h - f\|^2_{\Ct_h} + h^{-2}\|\Bq_h - \nabla u_h\|^2_{\Ct_h} + 2\tau^{-1} \|f\|^2_{\Omega} \Cs(u_h,\Bq_h) \\
\leq & \|A: \nabla \Bq_h - f\|^2_{\Ct_h} + h^{-2}\|\Bq_h - \nabla u_h\|^2_{\Ct_h} 
+ \max ( \dfrac{2\Vert f\Vert_{\Omega}^{2}}{\tau^{2}} , 1) \tau \Cs(u_h,\Bq_h) \\
\leq & \max ( \dfrac{2\Vert f\Vert_{\Omega}^{2}}{\tau^{2}} , 1) 
( \|A: \nabla \Bq_h - f\|^2_{\Ct_h} + h^{-2}\|\Bq_h - \nabla u_h\|^2_{\Ct_h} + \tau \Cs(u_h,\Bq_h)).
\end{align*}
Then we have:
\begin{align*}
 \|u - u_h\|^2_{1,2,\Ct_h} 
\le & C \mathcal{C}_f
( \|A: \nabla \Bq_h - f\|^2_{\Ct_h} + h^{-2}\|\Bq_h - \nabla u_h\|^2_{\Ct_h} + \tau \Cs(u_h,\Bq_h)) \\
= & C \mathcal{C}_f \Cj_h(u_h,\Bq_h).
\end{align*}
This completes the proof of \eqref{aux_4}. Finally, we recall that $(u_h, \Bq_h)$ is the minimizer of the functional $\Cj_h$ \eqref{mini_h}. This implies that:
\begin{align*}
\|u - u_h\|^2_{1,2,\Ct_h} \le C  \mathcal{C}_f \Cj_h(u_h,\Bq_h) 
\le C \mathcal{C}_f \Cj_h(w_h,\Bv_h)
\end{align*}
for all $(w_h, \Bv_h) \in W_h \times \BV_h$. Let $\Pi_W u$ be the Lagrange interpolant of $u$ over $W^c_h$ and $\boldsymbol{\Pi}_V \Bq$ be the {\bf{BDM}} projection of $\Bq$ in $\BV_h^c$. By taking $(w_h,\Bv_h) = (\Pi_W u,\boldsymbol{\Pi}_V \Bq)$ we have that $\Cs(\Pi_W u,\boldsymbol{\Pi}_V \Bq) = 0$ since they are in the comforming subspaces $W^c_h \times \BV^c_h$. The above estimate implies:
\begin{align*}
& \|u - u_h\|^2_{1,2,\Ct_h} \le C \mathcal{C}_f \Cj_h(u_h,\Bq_h) 
\le C \mathcal{C}_f \Cj_h(\Pi_W u,\boldsymbol{\Pi}_V \Bq) \\
= & C \mathcal{C}_f (\|A:\nabla \boldsymbol{\Pi}_V \Bq - f\|^2_{\Ct_h} 
+ h^{-2}\|\boldsymbol{\Pi}_V \Bq - \nabla \Pi_W u\|^2_{\Ct_h} + \tau \Cs(\Pi_W u,\boldsymbol{\Pi}_V \Bq) \\
= & C  \mathcal{C}_f (\|A:\nabla (\boldsymbol{\Pi}_V \Bq - \Bq)\|^2_{\Ct_h} 
+ h^{-2}\|(\boldsymbol{\Pi}_V \Bq - \Bq) - \nabla (\Pi_W u - u)\|^2_{\Ct_h} ) \\
\le & C  \mathcal{C}_f h^{2\min\{k-1, \delta\}} 
\|u\|^2_{\min\{k+1, 2+ \delta\},2,\Omega}.
\end{align*}
The last step we used the standard approximation properties of the interpolations $(\Pi_W u,\boldsymbol{\Pi}_V \Bq)$. This completes the proof of the error estimates for $\|u - u_h\|_{1,2,\Ct_h}$. For the error in $\Bq$, notice that \eqref{aux_4} implies that:
\[
\|u - u_h\|^2_{1,2,\Ct_h} + h^{-2}\|\Bq_h - \nabla u_h\|^2_{\Ct_h} 
\le C  \mathcal{C}_f \mathcal{J}_h(u_h, \Bq_h).
\]
With the same argument as the above, we have:
\[
h^{-1}\|\Bq_h - \nabla u_h\|_{\Ct_h} \le C  \mathcal{C}_f
h^{\min\{k-1, \delta\}} \|u\|_{\min\{k+1, 2+ \delta\},2,\Omega}.
\]
By triangle inequality, we have:
\[
\|\Bq - \Bq_h\|_{\Ct_h} \le \|\Bq_h - \nabla u_h\|_{\Ct_h} + \|\nabla u - \nabla u_h\|_{\Ct_h} 
\le C  \mathcal{C}_f h^{\min\{k-1, \delta\}} \|u\|_{\min\{k+1, 2+ \delta\},2,\Omega}.
\]
This completes the proof for \eqref{main_result1}.

Finally, if we assume the full $H^2$ regularity \eqref{W2p_assumption}, the above proofs can be seamlessly modified to get the optimal convergence rate for the discrete $H^2$ norm. Namely, we can use the same splitting:
\begin{align*}
\|u - u_h\|^2_{2,2,\Ct_h} &\le C(\|u - \widetilde{u}^c_h\|^2_{2,2,\Ct_h} + \|\widetilde{u}^c_h - u^c_h\|^2_{2,2,\Ct_h} + \|u^c_h - u_h\|^2_{2,2,\Ct_h}) \\
& \le \widetilde{T}_1 + C h^{-2}T_2 +  Ch^{-2}T_3.
\end{align*}
Here we applied the standard dicsrete inverse inequality \eqref{inverse_1}. This extra $h^{-2}$ power is not going to degenerate the order of convergence since in the above estimate $T_1$ is the dominating error while $T_2, T_3$ are $h^2$ higher order terms.

For $\widetilde{T}_1$, by virtue of the full $H^2$ regularity \eqref{W2p_assumption}, we can bound this term as:
\[
\widetilde{T}_1 \le C \|A:D^2 \widetilde{u}^c_h - A:D^2 u\|^2_{\Omega} = C \|A:D^2 \widetilde{u}^c_h - f\|^2_{\Omega}
\]
and the rest estimate is exactly the same as $T_1$. Therefore, the final convergence rate remains the same as the $H^1$ norm without losing any order. This completes the proof of \eqref{main_result2}.

\end{proof}

For the second error estimates Theorem \ref{Linf_error}, the main difference from the above proof lays in the fact that we relax the regularity assumption to \eqref{ABP_est}:
\[
\| w \|_{\infty, \Omega} \leq C \| A:D^{2}w \|_{d, \Omega}.
\]
For $d=3$, note that the RHS of the above ABP estimates become a stronger $L^3$ norm of the source term. This forces us to use a discrete inverse inequality in the analysis. Consequently, we need to use another lifting operator such that the resulting function remains to be piecewise polynomials. Namely,  for any function $w_h \in W^c_h$, let $\widetilde{W}^c_{h}$ to be the classical $C^1$-conforming Hsieh-Clough-Tocher macroelement \cite{DouglasDupont79} of degree $k+2$. In \cite{Neilan19} they prove that for $k \ge 2$ with $d = 2$ and $k=2,3$ with $d = 3$, there exists a $\widehat{w}^c_h \in \widetilde{W}^c_{h}$ such that the same approximation property as in Lemma \ref{aux_2} holds:
\[
\|D^l (w^c_h - \widetilde{w}^c_h)\|^2_{\Omega} \le C h^{3-2l} \|\lb \nabla w^c_h \cdot \Bn \rb\|^2_{\Cf^I_h}, \quad l=0,1,2.
\]

It is worth to mention that the use of virtual element lifting in Lemma \ref{aux_2} is necessary since the above Hsieh-Clough-Tocher macroelement lifting is only defined for $k=2,3$ in 3D. 

Next we define the $H^2$ conforming projection of $u$ in two steps: 

\begin{definition}
Let $u \in H^{2+\delta}(\Omega) \cap H^1_0(\Omega)$. We use $\Pi_h u \in W^c_h$ to denote the standard Lagrange interpolant of $u$. $\widetilde{\Pi}_h^c u$ is the $H^2$ projection of $\Pi^c_h u$ by using the Hsieh-Clough-Tocher macroelement lifting. 
\end{definition}

We have the following approxmation property of $\widetilde{\Pi}_h^c u$:
\begin{lemma}\label{aux_u}
Let $u \in H^{2+\delta}(\Omega) \cap H^1_0(\Omega)$ and $\widetilde{\Pi}_h^c u \in \widetilde{W}^c_h \cap H^1_0(\Omega)$ be the $H^2$ interpolant of $u$ defined as above. We have:
\[
\|u - \widetilde{\Pi}_h^c u\|_{l,2,\Omega} \le C h^{\min\{k+1, 2 + \delta\} - l} \|u\|_{\min \{ k+1, 2+\delta \}, 2, \Omega}, \quad l = 0,1,2.
\]
\end{lemma}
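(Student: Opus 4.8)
The plan is to factor the error through the intermediate Lagrange interpolant $\Pi_h u \in W^c_h$ and use the triangle inequality
\[
\|u - \widetilde{\Pi}_h^c u\|_{l,2,\Omega} \le \|u - \Pi_h u\|_{l,2,\Omega} + \|\Pi_h u - \widetilde{\Pi}_h^c u\|_{l,2,\Omega}, \qquad l = 0,1,2.
\]
The first term is handled by the classical Lagrange interpolation estimate: since $k \ge 2$ and $u \in H^{\min\{k+1,2+\delta\}}(\Omega)$, for each $l \in \{0,1,2\}$ one has $\|u - \Pi_h u\|_{l,2,\Omega} \le C h^{\min\{k+1,2+\delta\}-l}\|u\|_{\min\{k+1,2+\delta\},2,\Omega}$, which is exactly the asserted rate and needs no new work. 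Everything therefore reduces to the second term, the distance between the Lagrange interpolant and its Hsieh--Clough--Tocher lifting.

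For that term I would apply the lifting estimate recorded just above the lemma, taken with $w^c_h = \Pi_h u$ and $\widetilde{w}^c_h = \widetilde{\Pi}_h^c u$, namely
\[
\|D^l(\Pi_h u - \widetilde{\Pi}_h^c u)\|^2_{\Omega} \le C h^{3-2l}\,\|\lb \nabla \Pi_h u \cdot \Bn \rb\|^2_{\Cf^I_h}, \qquad l = 0,1,2,
\]
so the whole question collapses to bounding the interface-jump term $\|\lb \nabla \Pi_h u \cdot \Bn\rb\|_{\Cf^I_h}$. Here is the step I expect to be the crux: the exact solution is smooth enough that the normal component of its gradient carries no jump. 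Indeed $u \in H^{2+\delta}(\Omega) \subset H^2(\Omega)$ gives $\nabla u \in [H^1(\Omega)]^d$ globally, so the two one-sided traces of $\nabla u$ across any interior face agree and $\lb \nabla u \cdot \Bn\rb = 0$ on $\Cf^I_h$. This lets me insert $u$ at no cost and rewrite $\lb \nabla \Pi_h u \cdot \Bn\rb = \lb \nabla(\Pi_h u - u)\cdot \Bn\rb$, converting the jump of the interpolant's gradient into a pure interpolation-error quantity.

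With that reduction in hand the rest is routine power-counting. Writing $s := \min\{k+1,2+\delta\}$, I would estimate the jump face-by-face with the standard scaled trace inequality $\|v\|^2_{\partial K} \le C(h_K^{-1}\|v\|^2_K + h_K|v|^2_{1,2,K})$ applied to the first derivatives $v = \nabla(\Pi_h u - u)$, and then invoke the $l=1$ and $l=2$ Lagrange estimates for $\|\nabla(\Pi_h u - u)\|_K$ and $|\Pi_h u - u|_{2,2,K}$. Both resulting contributions scale like $h^{2s-3}$, giving $\|\lb \nabla \Pi_h u\cdot\Bn\rb\|^2_{\Cf^I_h} \le C h^{2s-3}\|u\|^2_{s,2,\Omega}$; substituting into the lifting estimate yields $\|D^l(\Pi_h u - \widetilde{\Pi}_h^c u)\|^2_\Omega \le C h^{2s-2l}\|u\|^2_{s,2,\Omega}$, i.e. the rate $h^{s-l}$, matching the first term. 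Collecting the two pieces completes the proof. The only genuine subtlety is the vanishing-jump identity $\lb\nabla u\cdot\Bn\rb = 0$: it is what makes the $l=0,1$ cases sharp and relies on $u$ being globally $H^2$ (the assumption $\delta>0$ being used afterward only to produce the stated convergence order), so I would take care to state precisely the trace-sense in which it holds.
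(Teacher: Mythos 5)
Your proposal is correct and follows essentially the same route as the paper's own proof: triangle inequality through the Lagrange interpolant, the standard interpolation estimate for the first piece, the Hsieh--Clough--Tocher lifting estimate for the second, and the key insertion $\lb \nabla \Pi_h u \cdot \Bn \rb = \lb \nabla (\Pi_h u - u) \cdot \Bn \rb$ (valid since $u \in H^2(\Omega)$ has no gradient jump), finished off by the scaled trace inequality and interpolation estimates. The only difference is that you spell out the face-by-face power counting that the paper compresses into a single sentence, which is a welcome level of detail rather than a deviation.
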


\begin{proof}
By the triangle inequality, standard interpolant property and Lemma \ref{aux_2} we have:
\begin{align*}
\|u - \widetilde{\Pi}_h^c u\|_{l,2,\Omega} & \le \|u - \Pi_h^c u\|_{l,2,\Omega}  + \|\Pi_h^c u - \widetilde{\Pi}_h^c u\|_{l,2,\Omega} \\
& \le C h^{\min\{k+1, 2 + \delta\} - l} \|u\|_{\min \{ k+1, 2+\delta \}, 2, \Omega}  + C h^{\frac32 - l} \| \lb \nabla \Pi^c_h u \cdot \Bn \rb\|_{\Cf^I_h}.
\end{align*}
The second term on the right hand side can be written as:
\begin{align*}
C h^{\frac32 - l} \| \lb \nabla \Pi^c_h u \cdot \Bn \rb\|_{\Cf^I_h} &= C h^{\frac32 - l} \| \lb \nabla (u - \Pi^c_h u) \cdot \Bn \rb\|_{\Cf^I_h}\\
&\le C h^{\min\{k+1, 2 + \delta\} - l} \|u\|_{\min \{ k+1, 2+\delta \}, 2, \Omega}.
\end{align*}
The last step is due to the trace inequality and the approximation property of Lagrange interpolant. This completes the proof.
\end{proof}

\begin{proof}{\bf of Theorem \ref{Linf_error}}
The ABP regularity \eqref{ABP_est} now reads as:
\begin{equation}
\label{ABP_2}
\|w\|_{\infty,\Omega} \le C \| A:D^{2}w \|_{\Omega}, \quad \forall w \in C^{0}(\overline{\Omega}) 
\cap W_{loc}^{2,2}(\Omega) \cap H_{0}^{1}(\Omega).  
\end{equation}
Similar as in the above proof, we split the error into three terms:
\begin{align}
\label{ABP_split}
%\|u - u_h\|^2_{\Ct_{h}} &\le C(\|u - \widetilde{u}^c_h\|^2_{\Ct_{h}} 
%+ \|\widetilde{u}^c_h - u^c_h\|^2_{\Ct_{h}} + \|u^c_h - u_h\|^2_{\Ct_{h}}) \\
\|u - u_h\|^2_{\infty, \Omega} &\le C(\|u - \widetilde{u}^c_h\|^2_{\infty, \Omega} 
+ \|\widetilde{u}^c_h - u^c_h\|^2_{\infty, \Omega} + \|u^c_h - u_h\|^2_{\infty, \Omega}) \\
\nonumber
& := T_1 + T_2 + T_3.
\end{align}
For $T_2, T_3$, we can use the same arguments as in the above proof to obtain:
\begin{align*}
T_2 & \le C h^{-d} \|\widetilde{u}^c_h - u^c_h\|^2_{\Ct_{h}}
 \le C h^{-2}(\|\Bq_h - \nabla u_h\|^2_{\Ct_h} +  \|\lb \Bq_h \cdot \Bn \rb\|^2_{1,\Cf^I_h} 
+ h^{-2}\|\lb u_h \rb\|^2_{1,\Cf_h}), \\
T_3 & \le C h^{-d} \|u^c_h - u_h\|^2_{\Ct_{h}}
\le C h^{-4} \|\lb u_h \rb\|^2_{1,\Cf_h}.
\end{align*}
For $T_1$, we further split it by inserting the projection of $u$ defined in Lemma \ref{aux_u}:
\begin{align*}
%T_1 \le C (\|u - \widetilde{\Pi}^c_h u\|^2_{\Ct_h} + \|\widetilde{\Pi}^c_h u - \widetilde{u}^c_h\|^2_{\Ct_h}). 
T_1 & \le C (\|u - \widetilde{\Pi}^c_h u\|^2_{\infty, \Omega} 
+ \|\widetilde{\Pi}^c_h u - \widetilde{u}^c_h\|^2_{\infty, \Omega}) \\
& \le C (\|u - \widetilde{\Pi}^c_h u\|^2_{2, 2, \Omega} 
+ \|\widetilde{\Pi}^c_h u - \widetilde{u}^c_h\|^2_{\infty, \Omega}). 
\end{align*}

We first consider the case $d = 2$.
Since $\widetilde{\Pi}^c_h u - \widetilde{u}^c_h \in \widetilde{W}^c_h \cap H^1_0(\Omega) \subset C^0(\bar{\Omega}) 
\cap W^{2,2}_{loc}(\Omega)$, we can use the ABP condition \eqref{ABP_2} to obtain:
\begin{align*}
 \|\widetilde{\Pi}^c_h u - \widetilde{u}^c_h\|^2_{\infty, \Omega} 
 \le &C \|A:D^2(\widetilde{\Pi}^c_h u - \widetilde{u}^c_h)\|^2_{\Omega} \\
\le & C \| A: D^2 (u - \widetilde{\Pi}^c_h u) \|^2_{\Omega} + C \|A:D^2\widetilde{u}^c_h - f\|^2_{\Omega}.
\end{align*}
So overall we have:
\begin{align*}
T_1 &\le C (\|u - \widetilde{\Pi}^c_h u\|^2_{2,2,\Omega} + \| A: D^2 (u - \widetilde{\Pi}^c_h u) \|^2_{\Omega} 
+ \|A:D^2\widetilde{u}^c_h - f\|^2_{\Omega}) \\
&\le C h^{2\min\{k-1, \delta\}} \|u\|^2_{\min \{ k+1, 2+\delta \}, 2, \Omega} + C \|A:D^2\widetilde{u}^c_h - f\|^2_{\Omega}.
\end{align*}
At this point, the second term on the right hand side in the above estimate is the same as in the estimates of $T_1$ in the proof of Theorem 2.1. Therefore, we can proceed in the same way from here to have the final estimates as:
\begin{align*}
  \|u - u_h\|^2_{\infty, \Omega}
\le & C h^{2\min\{k-1, \delta\}} \|u\|^2_{\min \{ k+1, 2+\delta \}, 2, \Omega} 
+ C  \mathcal{C}_f \mathcal{J}_h(u_h, \Bq_h) \\
\le & C  \mathcal{C}_f
h^{2\min\{k-1, \delta\}} \|u\|^2_{\min\{k+1, 2+ \delta\},2,\Omega},
\end{align*}
if the exact solution is smooth enough $u \in H^{2+\delta}(\Omega)$ for some $\delta > 0$.

Finally we consider the case $d = 3$. The burden for the sacrifice in the convergence rate lays in the ABP estimates. 
Namely, now the ABP estimate \eqref{ABP_est} becomes:
\begin{equation}
\|w\|_{\infty,\Omega} \le C \| A:D^{2}w \|_{3,\Omega}, \quad \forall w \in C^{0}(\overline{\Omega}) 
\cap W_{\text{loc}}^{2,3}(\Omega) \cap H_{0}^{1}(\Omega).
\end{equation}

The $L^3$ norm on the right side is stronger than the one we have in the minimizer. We bypass this issue with standard discrete inverse inequality which degenerates the order of convergence in the final stage. We lay out the details of the proof below.

Begin with the spitting as in \eqref{ABP_split} and $T_1$. The only term needs modification is $\|\widetilde{\Pi}^c_h u - \widetilde{u}^c_h\|^2_{\infty, \Omega}$. More precisely, we can apply the ABP estimate as:
\[
C \|\widetilde{\Pi}^c_h u - \widetilde{u}^c_h\|^2_{\infty, \Omega} 
\le C \|A:D^2(\widetilde{\Pi}^c_h u - \widetilde{u}^c_h)\|^2_{3, \Omega}.
\]
Notice that if $A$ is piecewise polynomial, the last term in the above estimate $A:D^2(\widetilde{\Pi}^c_h u - \widetilde{u}^c_h)$ remains to be piecewise polynomials over $\Ct_h$, we can now apply the inverse inequality \eqref{inverse_1} to obtain:
\begin{align*}
\|A:D^2(\widetilde{\Pi}^c_h u - \widetilde{u}^c_h)\|^2_{3, \Omega} &\le C h^{-1} \|A:D^2(\widetilde{\Pi}^c_h u - \widetilde{u}^c_h)\|^2_{\Omega} \\ 
& \le C h^{-1} (\|A:D^2(\widetilde{\Pi}^c_h u - u)\|^2_{\Omega} + \|A:D^2 \widetilde{u}^c_h - f\|^2_{\Omega}) 
\end{align*}
From here we can use the same argument to control the last term on the right hand side as in the previous case. Finally, due to the $h^{-1}$ from the inverse inequality, we have the final estimates as:
\begin{align*}
\|u - u_h\|^2_{\Ct_h} &\le C h^{2\min\{k-1, \delta\} - 1} \|u\|^2_{\min \{ k+1, 2+\delta \}, 2, \Omega} 
+ C  \mathcal{C}_f h^{-1} \mathcal{J}_h(u_h, \Bq_h) \\ 
&\le C  \mathcal{C}_f h^{2\min\{k-1, \delta\} -1} \|u\|^2_{\min\{k+1, 2+ \delta\},2,\Omega},
\end{align*}
if the exact solution is smooth enough $u \in H^{2+\delta}(\Omega)$ for some $\delta > \frac{1}{2}$.
This completes the proof.

\end{proof}

\section{Fixed-Point Proximity Algorithms}

In this section, we develop a fast numerical algorithm for optimization problem \eqref{mini_h} based on the Fixed-Point Proximity Algorithms (\FPPA{}) \cite{micchelli2011proximity,li2015multi,RenInexact} and establish a convergence theorem for the algorithm.
    
A challenge of solving problem \eqref{mini_h} is that the $L_1$ norm involved in its objective function is non-differentiable, making classical gradient-based methods invalid. To surmount this difficulty, we first characterize solutions of the optimization problem \eqref{mini_h} as fixed-points of a nonlinear map involved \textit{proximity operators} of functions appearing in the objective function. Based on the fixed-point equation, application of a matrix splitting technique to the fixed-point equation leads to the \FPPA{} with guaranteed convergence.

\subsection{An Equivalent Formulation of the Basic Optimization Problem.}
We first reformulate the optimization problem \eqref{mini_h} in a form convenient to apply \FPPA{}. To this end, we choose a basis of the finite dimensional space $W_h \times \BV_h$ and denote it by $\mathcal{B}_{W_h \times V_h}$. The numerical solution $(u_h,\Bq_h)$ is represented by its coefficients $x$ with respect to this basis. Note the choice of the basis is crucial to obtain a sparse representation of the solution. We will demonstrate this point with two choices of bases: a standard Lagrange nodal basis and a {\em multiscale basis}.

We associate each element in $(w,\Bv)\in W_h\times\BV_h$ a vector $x\in\bR^N$, which represents its coefficients in the basis. This association is one-to-one. Hence,  $W_h \times \BV_h$ is isomorphic to $\bR^N$.
Specifically, we use $\{w_i\}_{i=1}^{N_W}$ to denote the basis of $W_h$ and $\{\Bv_i\}_{i=1}^{N_V}$ the basis of $\BV_h$. Let
\begin{align*}
    B_{11}&:=h^{-2}\jz{\kh{\nabla w_i,\nabla w_j}_{\Ct_h}}_{1\leq i,j\leq N_W},\\
    B_{12}&:=h^{-2}\jz{\kh{\nabla w_i,\Bv_j}_{\Ct_h}}_{1\leq i\leq N_W,1\leq j\leq N_V},\\
    B_{22}&:=\jz{\kh{A:\nabla\Bv_i,A:\nabla\Bv_j}_{\Ct_h}+h^{-2}\kh{\Bv_i,\Bv_j}_{\Ct_h}}_{1\leq i,j\leq N_V},
\end{align*}
and define
$$
    B:=\jz{B_{11}&B_{12}\\B_{12}^\ast&B_{22}}.
$$
We also define
$$
 b:=\jz{\kh{-2\kh{A:\nabla\Bv_j,f}_{\Ct_h}}_{1\leq j\leq N_V}\\0_{N_W}},
$$
where $0_{N_W}$ denotes the $N_W$-dimensional zero vector.
In terms of the symmetric positive semi-definite matrix $B$ and the vector $b$ defined above, 
we write 
$$
\normm{A:\nabla\Bv-f}_{\Ct_h}^2+h^{-2}\normm{\Bv+\nabla w}_{\Ct_h}^2-\normm{f}^2_{\Ct_h}=x^\ast Bx+b^\ast x.
$$
Likewise, we rewrite the boundary terms in  \eqref{mini_h}. To this end, 
we introduce matrix $L\in\bR^{M\times N}$ by
\begin{equation}\label{eq:L}
    L:=\jz{\tau h^{-1}L_1\\\tau h^{-1}L_2\\\tau h^{-2}L_3}
\end{equation}
with $L_1\in\bR^{2|\Ce_h^I|\times N}$, $L_2\in\bR^{3|\Ce_h^I|\times N}$ and $L_3\in\bR^{3|\Ce_h^B|\times N}$. Here, $L_1x$ contains all endpoint values of $\zzkh{\Bv \cdot \Bn}$ on $\Ce_h^I$, $L_2x$ contains all endpoint and midpoint values of $\zzkh{w}$ on $\Ce_h^I$ and $L_3x$ contains all endpoint and midpoint values of $w$ on $\Ce_h^B$.
We then introduce $d\in\bR^M$ by setting
\[
    d :=\jz{0_{2|\Ce_h^I|}\\0_{3|\Ce_h^I|}\\d_1},
\]
where $d_1\in\bR^{_{2|\Ce_h^B|}}$ contains all endpoint and midpoint values of $g$ on $\Ce_h^B$, with each element arranged according to $L_3x$.
In terms of the matrix $L$ and the vector $d$, we identify
$$
{\tau h^{-1} \normm{\zzkh{\Bv \cdot \Bn}}_{1,\Ce^I_h}+\tau h^{-1}\normm{\zzkh{w} }_{1,\Ce^I_h}+\tau h^{-2}\normm{w-g}_{1,\Ce^B_h}}=\normm{Lx-d}_1.
$$
%{\bf Ke \& Jin: I don't find these two terms $\tau h^{-2}\normm{w-g}_{1,\Ce^B_h}$, $\normm{f}^2_{\Ct_h}$ in (2.3)} {\color{blue}Ans: As stated at Remark 2.1, (2.3) has homogeneous boundary conditions, meaning that $g=0$. For general boundary condition, there would be term $\|w-g\|_{1,\Ce_h^B}$. As for term $\normm{f}^2_{\Ct_h}$, it comes from $\|A : \nabla \Bv - f\|^2_{\Ct_h} $ in $\mathcal{J}_h$ in energy functional between (2.2) and (2.3).}
In this notation, problem \eqref{mini_h} is reformulated as
\begin{equation}\label{eq:mdl_0}
        \argmin_{x\in\bR^N}\dkh{x^\ast Bx+b^\ast x+\normm{Lx-d}_1}.
\end{equation}
    %This could be achieved since each term in \eqref{mini_h} depends on the coefficient vector $x\in\bR^N$ either linearly or quadratically.
    For convenience of further analysis, we define functions $g:\bR^N\to\bR$ by $$
    g(x)\coloneqq x^\ast Bx+b^\ast x, \ \ \mbox{for}\ \  x\in\bR^N
    $$ 
    and $h:\bR^M\to\bR$ by 
    $$
    h(y)\coloneqq\|y-d\|_1,\ \ \mbox{for}\ \  y\in\bR^M.
    $$
    With these definitions, we identify \eqref{eq:mdl_0} as
    \begin{equation}\label{eq:mdl}
        \argmin_{x\in\bR^N}\dkh{g\kh{x}+h\kh{Lx}}.
    \end{equation}
    Clearly, \eqref{eq:mdl} is a convex optimization problem studied in \cite{li2015multi, micchelli2011proximity}. Since the objective function of  \eqref{eq:mdl} is non-differentiable, the classical gradient-based method is not applicable.

\subsection{A Fixed-Point Formulation.}
    We follow \cite{li2015multi, micchelli2011proximity} to reformulate \eqref{eq:mdl} as a fixed-point problem.
    %
    %\subsection{Fixed-Point Characterization of Minimizers}
    %
   % We characterize the solution of \eqref{eq:mdl} as a fixed-point problem. 
   To this end, we recall some terminologies of convex analysis \cite{micchelli2011proximity, rockafellar2009variational}. We denote by $\Gamma_0(\bR^n)$ the class of all lower semi-continuous and proper convex functions mapping $\bR^N\to\bR\cup\{+\infty\}$. Notice that $g\in\Gamma_0(\bR^N)$ and $h\in\Gamma_0(\bR^M)$. By $\nj{\cdot}{\cdot}$ we denote the Euclidean inner product on $\bR^N$. For any function $f\in\Gamma_0(\bR^N)$, we define the convex conjugate of $f$ by
    \[
        f^\ast\kh{u}\coloneqq\sup_{x\in\bR^N}\dkh{\nj{x}{u}-f\kh{u}}.
    \]
    Let $\bS_{+}^{N}\subset\bR^{N\times N}$ denote the collection of all symmetric positive definite matrices. The proximity operator of $f\in\Gamma_0(\bR^N)$ with respect to $P\in\bS_{+}^{N}$ is a function mapping $\bR^N\to\bR^N$, defined by
    \begin{equation}\label{eq:prox}
        \prox_{f,P}\kh{x}\coloneqq\argmin_{u\in\bR^N}\dkh{f\kh{u}+\frac{1}{2}\normm{u-x}_P^2},\qquad \text{for all }x\in\bR^N,
    \end{equation}
    where $\|{\cdot}\|_P^2\coloneqq\nj{\cdot}{P\cdot}$. It is worth remarking that for $f\in\Gamma_0(\bR^N)$, we always have $f^\ast\in\Gamma_0(\bR^N)$, and there holds
    \begin{equation}\label{eq:conj_prox}
        \prox_{f^\ast,P}=\Id-P^{-1}\circ\prox_{f,P^{-1}}\circ P.
    \end{equation}
    By a direct application of relation between proximity operator and subdifferntial of functions in $\Gamma_0(\bR^N)$, we can easily derive the following characterization theorem \cite{li2015multi, micchelli2011proximity}.
    
    \begin{theorem}
        If $x_\ast\in\bR^N$ is a solution of \eqref{eq:mdl}, then there exists $y_\ast\in\bR^M$ such that for all $\alpha>0$, $P\in\bS^N_{+}$ and $Q\in\bS^M_{+}$, there holds
        \begin{equation}\label{eq:fpe}
            \left\{
                \begin{aligned}
                    x_\ast&=\prox_{\alpha g,P}\kh{x_\ast-P^{-1}L^\ast y_\ast},\\
                    y_\ast&=\prox_{(\alpha h)^\ast,Q}\kh{Q^{-1}Lx_\ast+y_\ast}.
                \end{aligned}
            \right.
        \end{equation}
        Conversely, if there exist $P\in\bS^N_{+}$ and $Q\in\bS^M_{+}$ satisfying the fixed-point equations \eqref{eq:fpe}, then the corresponding $x_\ast$ is a solution of \eqref{eq:mdl}.
    \end{theorem}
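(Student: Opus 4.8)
The plan is to reduce both implications to the first-order optimality (Fermat) condition for \eqref{eq:mdl}, rephrased in the language of proximity operators. The argument rests on two metric-free facts. The first is the subdifferential characterization of the proximity operator: for $f\in\Gamma_0(\bR^N)$ and $P\in\bS^N_{+}$ one has $u=\prox_{f,P}(x)$ if and only if $P(x-u)\in\partial f(u)$, which is simply the optimality condition of the strongly convex minimization \eqref{eq:prox}. The second is the conjugate inversion rule $v\in\partial f(w)\iff w\in\partial f^\ast(v)$, valid for every $f\in\Gamma_0$. Crucially, both facts hold for \emph{any} admissible positive definite matrix, which is exactly why the fixed-point equations \eqref{eq:fpe} can be asserted uniformly in $P$ and $Q$.

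First I would record Fermat's rule: since $g+h\circ L$ is convex and finite-valued, $x_\ast$ solves \eqref{eq:mdl} if and only if $0\in\partial(g+h\circ L)(x_\ast)$. Because $g$ and $h$ take only finite values, the constraint qualification for the subdifferential sum-and-chain rule holds automatically, so $\partial(g+h\circ L)(x_\ast)=\partial g(x_\ast)+L^\ast\,\partial h(Lx_\ast)$. Thus optimality of $x_\ast$ is equivalent to the existence of a dual certificate $t_\ast\in\partial h(Lx_\ast)$ with $-L^\ast t_\ast\in\partial g(x_\ast)$. For a fixed $\alpha>0$, setting $y_\ast:=\alpha t_\ast$ converts these into $-L^\ast y_\ast\in\alpha\,\partial g(x_\ast)$ and $y_\ast\in\alpha\,\partial h(Lx_\ast)$; this $y_\ast$ is the vector claimed in the theorem.

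For the forward direction I would then translate each inclusion into a proximity equation. Applying the characterization (i) with $f=\alpha g$, the identity $P\big((x_\ast-P^{-1}L^\ast y_\ast)-x_\ast\big)=-L^\ast y_\ast$ shows that $-L^\ast y_\ast\in\alpha\,\partial g(x_\ast)$ is equivalent to $x_\ast=\prox_{\alpha g,P}(x_\ast-P^{-1}L^\ast y_\ast)$, for every $P\in\bS^N_{+}$. For the second equation I would first pass to the conjugate: by (ii) applied to $\alpha h$, the inclusion $y_\ast\in\partial(\alpha h)(Lx_\ast)$ is equivalent to $Lx_\ast\in\partial(\alpha h)^\ast(y_\ast)$; then (i) with $f=(\alpha h)^\ast$ together with $Q\big((Q^{-1}Lx_\ast+y_\ast)-y_\ast\big)=Lx_\ast$ turns this into $y_\ast=\prox_{(\alpha h)^\ast,Q}(Q^{-1}Lx_\ast+y_\ast)$, for every $Q\in\bS^M_{+}$. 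This yields \eqref{eq:fpe}. The converse runs the same equivalences backwards: from \eqref{eq:fpe} for one admissible pair $(P,Q)$ I recover the two inclusions, combine the first with $L^\ast$ times the second to get $0\in\alpha\big(\partial g(x_\ast)+L^\ast\partial h(Lx_\ast)\big)$, and conclude $0\in\partial(g+h\circ L)(x_\ast)$ using only the always-valid inclusion $\partial g(x_\ast)+L^\ast\partial h(Lx_\ast)\subseteq\partial(g+h\circ L)(x_\ast)$, so that $x_\ast$ minimizes by Fermat.

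The main obstacle, and really the only delicate point, is the forward use of the subdifferential sum-and-chain rule, which in general requires a constraint qualification; here it is free because $g$ and $h$ are everywhere finite, but this is a step to be justified rather than asserted. A secondary bookkeeping issue is the scaling: the genuine output of optimality is the certificate $t_\ast$, while the $y_\ast$ appearing in \eqref{eq:fpe} is its rescaling $\alpha t_\ast$, so one must verify that the two displayed equations deploy the factor $\alpha$ consistently. Everything else is the mechanical substitution $P(x-u)\mapsto\partial f(u)$ supplied by (i), which is insensitive to the particular choices of $P\in\bS^N_{+}$ and $Q\in\bS^M_{+}$.
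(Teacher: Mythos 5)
Your proof is correct and is essentially the argument the paper intends: the paper offers no proof of its own, deferring to the ``direct application of the relation between proximity operator and subdifferential'' in the cited references, which is exactly the machinery you spell out ($u=\prox_{f,P}(x)\iff P(x-u)\in\partial f(u)$, conjugate inversion, and Fermat's rule with the sum--chain rule, whose qualification is free since $g,h$ are finite-valued). One remark on your ``bookkeeping'' point: your $y_\ast=\alpha t_\ast$ necessarily depends on $\alpha$, and indeed the theorem's literal quantifier order (one $y_\ast$ for \emph{all} $\alpha>0$) cannot hold, since $\{y_\ast/\alpha:\alpha>0\}\subseteq\partial h(Lx_\ast)$ would force $y_\ast=0$ by boundedness of $\partial h$; the genuine uniformity is only in $P$ and $Q$, which is what your proof delivers and what the paper and its sources actually use.
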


    It is worth noticing that due to the smoothness of function $g$, there is an alternative form of fixed-point characterization of solutions of \eqref{eq:mdl} involving gradient of $g$. This gradient approach simplifies the fixed-point equation \Cref{eq:fpe}, but at the same time requires exact estimation of Lipschitz constant of the gradient for convergence analysis, which is complicated for problem \cref{mini_h}. To have a clear convergence condition, we choose \cref{eq:fpe} for further discuss. For details of the gradient approach, readers are referred to \cite{Li2015Fast}.
    
    We can rewrite fixed-point equation \eqref{eq:fpe} in a compact form.  For $\alpha>0$, $P\in\bS_+^N$ and $Q\in\bR^M_+$, we define block matrices
    \[ R\coloneqq \jz{P&0\\0&Q},\quad E\coloneqq\jz{I&-P^{-1}L^\ast\\Q^{-1}L&I},
    \]
    and function
    \[
        \Phi\kh{z}\coloneqq \alpha g\kh{x}+\kh{\alpha h\kh{y}}^\ast.
    \]
    Then,  \eqref{eq:fpe} is equivalent to
    \begin{equation}\label{eq:cmp_fpe}
        z_\ast=\prox_{\Phi,R}\kh{Ez_\ast},
    \end{equation}
    where $z_\ast\coloneqq\kh{x_\ast,y_\ast}\in\bR^{N+M}$. 
    That is, $z_\ast$ is a fixed-point of operator $\prox_{\Phi,R}\circ E$.
    
    The fixed-point characterization \eqref{eq:cmp_fpe} brings two advantages. First of all, this formulation provides a guidance for designing convergence-guaranteed algorithms. Moreover, by this characterization, the non-differentiability of $\|\cdot\|_1$ is no longer an obstacle, as long as the proximity operator $\prox_{\Phi,R}$ can be easily calculated. Indeed, as we will show next, operator $\prox_{\Phi,R}$ has a closed-form. 
    
    %Before we consider fixed-point iteration for fixed-point equation \eqref{eq:fpe} in next subsection, here we provide the closed form of operator $\prox_{\Phi,R}$.
    
    \begin{pps}\label{pps:prox}
    If $\alpha>0$, $P\in\bS_{+}^N$ and $Q:={\rm diag}(q_i)_{i=1}^M\in\bS_{+}^M$, then for any $x\in\bR^N$ and $y\in\bR^M$, 
        \[
            \prox_{\alpha g,P}\kh{x}=\kh{P+2\alpha B}^{-1}\kh{Px-\alpha b},
        \]
        and
        \[
            \prox_{(\alpha h)^\ast,Q}\kh{y}=\zkh{\Proj_{\zkh{-\alpha,\alpha}}\kh{y_i-d_i/q_{i}}}_{i=1}^M,
        \]
        where $\Proj_{\zkh{-\alpha,\alpha}}$ denotes the projection operator onto $\zkh{-\alpha,\alpha}$.
    \end{pps}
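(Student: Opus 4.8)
The plan is to establish the two closed-form expressions separately, each by a direct computation from the definition of the proximity operator in \eqref{eq:prox}. For the first identity involving $\prox_{\alpha g,P}$, I would start from the defining minimization
\[
\prox_{\alpha g,P}(x)=\argmin_{u\in\bR^N}\dkh{\alpha\kh{u^\ast B u+b^\ast u}+\tfrac12\normm{u-x}_P^2}.
\]
The objective is a smooth, strictly convex quadratic in $u$ (strict convexity follows since $P\in\bS_+^N$ and $B$ is symmetric positive semi-definite, so $2\alpha B+P\succ 0$), hence its unique minimizer is found by setting the gradient to zero. Differentiating gives the normal equation $2\alpha B u+\alpha b+P(u-x)=0$, i.e. $(P+2\alpha B)u=Px-\alpha b$. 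Since $P+2\alpha B$ is invertible, solving yields exactly $\prox_{\alpha g,P}(x)=(P+2\alpha B)^{-1}(Px-\alpha b)$. I expect this part to be routine.

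For the second identity involving $\prox_{(\alpha h)^\ast,Q}$, the plan is to use the conjugate relation \eqref{eq:conj_prox}, namely
\[
\prox_{(\alpha h)^\ast,Q}=\Id-Q^{-1}\circ\prox_{\alpha h,Q^{-1}}\circ Q,
\]
which reduces the task to computing $\prox_{\alpha h,Q^{-1}}$ for $h(y)=\normm{y-d}_1$. The key simplifying structure is that $Q={\rm diag}(q_i)$ is diagonal, so the norm $\normm{u-y}_{Q^{-1}}^2=\sum_i q_i^{-1}(u_i-y_i)^2$ decouples across coordinates, and the $\ell_1$ norm $\sum_i|u_i-d_i|$ is separable as well. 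Thus the minimization defining $\prox_{\alpha h,Q^{-1}}$ splits into $M$ independent scalar problems, each of the form $\min_{u_i}\{\alpha|u_i-d_i|+\tfrac{1}{2q_i}(u_i-y_i)^2\}$, whose solution is the standard soft-thresholding operator. I would then substitute the soft-threshold formula back through \eqref{eq:conj_prox} and simplify; the identity $\Id-\operatorname{soft}=\Proj_{[-\alpha,\alpha]}$ (after the appropriate scaling by $q_i$) produces the claimed componentwise projection $\Proj_{[-\alpha,\alpha]}(y_i-d_i/q_i)$.

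The main obstacle I anticipate is purely bookkeeping in the second identity: tracking the scaling factors $q_i$ correctly through the conjugate formula \eqref{eq:conj_prox}, since $Q$ appears on one side and $Q^{-1}$ inside the inner proximity operator, so the argument shifts and thresholds must be rescaled consistently to land on the clean form $\Proj_{[-\alpha,\alpha]}(y_i-d_i/q_i)$. I would handle this by verifying the single-coordinate case explicitly—using the elementary identity that for the scalar soft-threshold with parameter $\alpha$, one has $t-\operatorname{soft}_\alpha(t)=\Proj_{[-\alpha,\alpha]}(t)$—and then assembling the coordinates, the diagonality of $Q$ guaranteeing that no cross terms appear. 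Everything else is a straightforward application of first-order optimality for strictly convex objectives.
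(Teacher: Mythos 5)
Your proposal is correct and follows essentially the same route as the paper's proof: the first formula by setting the gradient of the strictly convex quadratic to zero, and the second by invoking \eqref{eq:conj_prox}, reducing to the (translated, coordinatewise) soft-thresholding of the $\ell_1$ norm via diagonality of $Q$, and simplifying with the identity that the residual of soft-thresholding is the projection onto $[-\alpha,\alpha]$. The paper handles the shift by $d$ through the explicit substitution $\tilde v = v - d$ and then combines, which is exactly the bookkeeping you describe.
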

    \begin{proof}
        Both of these closed-form formulas can be shown by the definition of the proximity operator. By \eqref{eq:prox},
        \[
            \prox_{\alpha g,P}\kh{x}=\argmin_{u\in\bR^N}\dkh{\alpha u^\ast Bu+\alpha b^\ast u+\frac{1}{2}\normm{u-x}^2_P},
        \]
        whose right-hand side is a minimization of a quadratic function of $u$. Taking the gradient of the right-hand side and solving for the zero gradient, we obtain the first formula.
        
        It remains to establish the formula for $\prox_{(\alpha h)^\ast,Q}$. By identity \eqref{eq:conj_prox}, for any $y\in\bR^M$ we have that
        \[
            \prox_{(\alpha h)^\ast,Q}\kh{y}=y-Q^{-1}\prox_{\alpha h,Q^{-1}}\kh{Qy}.
        \]
        By setting $\tilde v\coloneqq v-d$, we observe that
        \begin{align*}
            \prox_{\alpha h,Q^{-1}}\kh{Qy}&=\argmin_{v\in\bR^M}\dkh{\alpha\normm{v-d}_1+\frac{1}{2}\normm{v-Qy}^2_{Q^{-1}}}\\
            &=d+\argmin_{\tilde v\in\bR^M}\dkh{\alpha\normm{\tilde v}_1+\frac{1}{2}\normm{\tilde v-\kh{Qy-d}}^2_{Q^{-1}}}\\
            &=d+\prox_{\alpha\normm{\cdot}_1,Q^{-1}}\kh{Qy-d}.
        \end{align*}
        Since $Q$ is diagonal, we obtain that  
        \[
            \prox_{\alpha\normm{\cdot}_1,Q^{-1}}\kh{Qy-d}=\zkh{\kh{\kh{y_i-\alpha}q_i-d_i}_+\sgn\kh{\kh{y_i-\alpha}q_i-d_i}}_{i=1}^M,
        \]
        where $(\cdot)_+\coloneqq\max\{\cdot,0\}$ and $\sgn$ is the sign function (for details, see \cite{micchelli2011proximity}). Finally, combining the equations above yields the desired formula.
       % \[
        %  \prox_{(\alpha h)^\ast,Q}\kh{y}=\zkh{\Proj_{\zkh{-\alpha,\alpha}}\kh{y_i-d_i/q_i}}_{i=1}^N,
        %\]
        %which is exactly the desired.
    \end{proof}

    With help of \Cref{pps:prox}, we can easily obtain the closed form of $\prox_{\Phi,R}$ by the following calculation. For any $z=\kh{x,y}\in\bR^{N+M}$, by definition of proximity operator \cref{eq:prox}, we have
    \begin{align*}
        \prox_{\Phi,R}\kh{z}&=\argmin_{u\in\bR^N,v\in\bR^M}\dkh{\alpha g\kh{u}+\kh{\alpha h}^\ast\kh{v}+\frac{1}{2}\normm{u-x}^2_P+\frac{1}{2}\normm{v-y}^2_Q}\\
        &=\kh{\prox_{\alpha g,P}\kh{x},\prox_{\kh{\alpha h}^\ast,Q}\kh{y}}.
    \end{align*}
    
    \subsection{Fixed-Point Proximity Algorithms}
    
    In this section we consider the numerical algorithm of solving fixed-point equation \eqref{eq:cmp_fpe}. The na\"ive Picard's iteration might not converge, due to the expanding nature of the linear mapping $E$ \cite{li2015multi,Jin2022Inexact,RenInexact}. Fortunately a matrix-splitting technique will do the trick. Specifically, we split $E$ into
    \[
        E=\kh{E-M}+M,
    \]
    then the Fixed-Point Proximity Algorithm (\FPPA{}) is proposed by
    \begin{equation}\label{alg:FPPA}
        z_{k+1}=\prox_{\Phi,R}\kh{\kh{E-M}z_{k+1}+Mz_k}.
    \end{equation}
    Based on properties of proximity operators, a corollary of \cite{Jin2022Inexact,RenInexact} provides the following convergence theorem. Define $\bS^{N}$ as the collection of all positively semi-definite matrices on $\bR^{N\times N}$. %{\bf Jin: Please state the corollary of \cite{li2015multi}. I would like to see a self-contained proof of the following theorem.} {\color{blue} Ans: I added discussion and corollary of one dimensional case in below.}
    
    \begin{theorem}\label{thm:conv}
        Suppose $\alpha>0$. If $RM\in\bS^{N+M}$, then for any initial $z_0\in\bR^{N+M}$, the \FPPA{} scheme \eqref{alg:FPPA} converges to a solution of fixed-point equation \eqref{eq:cmp_fpe}.
    \end{theorem}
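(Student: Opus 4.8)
The plan is to realize the iteration \eqref{alg:FPPA} as a Fej\'er-monotone sequence with respect to a seminorm induced by $RM$, and then to invoke the general convergence result for fixed-point proximity schemes established in \cite{RenInexact,Jin2022Inexact}, whose abstract hypotheses reduce, in the present block setting, to the single condition $RM\in\bS^{N+M}$. Two structural facts drive everything. A direct block multiplication gives
\[
RE=\jz{P&-L^\ast\\L&Q},\qquad RE-R=\jz{0&-L^\ast\\L&0},
\]
so $RE-R$ is skew-symmetric; consequently $\nj{w}{Ew}_R=w^\ast REw=\normm{w}_R^2$ for every $w\in\bR^{N+M}$, where $\nj{\cdot}{\cdot}_R$ and $\normm{\cdot}_R$ denote the $R$-inner product and $R$-norm. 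Second, since $g\in\Gamma_0(\bR^N)$ and $h\in\Gamma_0(\bR^M)$ imply $\Phi\in\Gamma_0(\bR^{N+M})$, the proximity operator $\prox_{\Phi,R}$ is firmly nonexpansive in $\normm{\cdot}_R$, that is $\nj{\prox_{\Phi,R}u-\prox_{\Phi,R}v}{u-v}_R\ge\normm{\prox_{\Phi,R}u-\prox_{\Phi,R}v}_R^2$.

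First I would derive the governing one-step estimate. Write $e_k:=z_k-z_\ast$, where $z_\ast$ is any solution of \eqref{eq:cmp_fpe} (such a solution exists because \eqref{eq:mdl} is convex and admits a minimizer). Applying firm nonexpansiveness to the points $u_k:=(E-M)z_{k+1}+Mz_k$ and $u_\ast:=Ez_\ast$, and using $\prox_{\Phi,R}u_k=z_{k+1}$ (the definition of \eqref{alg:FPPA}) together with $\prox_{\Phi,R}u_\ast=z_\ast$ (from \eqref{eq:cmp_fpe}), yields
\[
\nj{e_{k+1}}{(E-M)e_{k+1}+Me_k}_R\ge\normm{e_{k+1}}_R^2.
\]
Substituting the identity $\nj{e_{k+1}}{Ee_{k+1}}_R=\normm{e_{k+1}}_R^2$ cancels the $R$-norm contributions and leaves $e_{k+1}^\ast RMe_k\ge e_{k+1}^\ast RMe_{k+1}$. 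Since $RM\in\bS^{N+M}$ is symmetric positive semi-definite, it induces a semi-inner product $\nj{u}{v}_{RM}:=u^\ast RMv$ with seminorm $\normm{\cdot}_{RM}$, and the polarization identity then gives
\[
\normm{e_k}_{RM}^2-\normm{e_{k+1}}_{RM}^2\ge\normm{e_k-e_{k+1}}_{RM}^2\ge0.
\]
Hence $\{\normm{e_k}_{RM}\}$ is nonincreasing and, after summation, $\normm{z_{k+1}-z_k}_{RM}\to0$.

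Finally I would upgrade this monotonicity to convergence of $\{z_k\}$ to a fixed point. The sequence is bounded in $\normm{\cdot}_{RM}$ and asymptotically regular, so a cluster point $\bar z$ exists; passing to the limit along a subsequence in \eqref{alg:FPPA}, and using that $\prox_{\Phi,R}\circ\big((E-M)(\cdot)+M(\cdot)\big)$ has closed graph, shows $\bar z$ satisfies \eqref{eq:cmp_fpe}. Fej\'er monotonicity of $\{\normm{z_k-\bar z}_{RM}\}$ together with an Opial-type uniqueness-of-cluster-point argument then forces the entire sequence to converge to $\bar z$.

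The hard part is precisely this last passage, because $RM$ is only assumed positive semi-definite, so $\normm{\cdot}_{RM}$ is a genuine seminorm rather than a norm: the Fej\'er argument controls only the components of $e_k$ transverse to $\ker(RM)$, and one must separately handle the iterates along $\ker(RM)$ using the nonexpansiveness of the implicit update and the invertibility built into the block structure of $E$. This delicate bookkeeping is exactly what is packaged into the general theorem of \cite{RenInexact,Jin2022Inexact}; verifying that its abstract convergence condition collapses to $RM\in\bS^{N+M}$ once the skew-symmetry of $RE-R$ is taken into account is the main content of the present corollary, while the estimates sketched above are routine.
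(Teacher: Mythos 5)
The paper itself offers no proof of this theorem: it is stated as a direct corollary of the results in \cite{Jin2022Inexact,RenInexact}, with the accompanying remark that it extends \cite{li2015multi} from the positive definite case $RM\in\bS^{N+M}_+$ to the merely semi-definite case $RM\in\bS^{N+M}$. Your proposal is therefore more detailed than the paper's own treatment, and the details you supply are correct: the block computation $RE-R=\jz{0&-L^\ast\\L&0}$ and its skew-symmetry, hence $\nj{w}{Ew}_R=\normm{w}_R^2$; the firm nonexpansiveness of $\prox_{\Phi,R}$ for $\Phi\in\Gamma_0(\bR^{N+M})$; the resulting one-step inequality $e_{k+1}^\ast RM\,e_k\ge e_{k+1}^\ast RM\,e_{k+1}$; and the Fej\'er-type decay $\normm{e_k}_{RM}^2-\normm{e_{k+1}}_{RM}^2\ge\normm{z_k-z_{k+1}}_{RM}^2$. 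This is precisely the classical skeleton underlying \cite{li2015multi}. You are also right to flag the final passage as the genuine difficulty: since $RM$ is only positive semi-definite, $\normm{\cdot}_{RM}$ is a seminorm, so your intermediate claim that the iterates are bounded and hence possess a cluster point does not follow from the Fej\'er estimate alone --- the component of $z_k$ along $\ker(RM)$ is invisible to it, and your closed-graph argument cannot even get started without a convergent subsequence. Controlling that kernel component is exactly the content of the extension proved in \cite{Jin2022Inexact,RenInexact}, to which you defer, just as the paper does. So, measured against the paper's own standard (pure citation of the same references), your proposal has no gap and in fact reconstructs the mechanism the paper leaves implicit; as a self-contained proof it is incomplete in precisely the place you identify.
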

    
    Notice that \Cref{thm:conv} extends results from \cite{li2015multi}, where $RM\in\bS^{N+M}_+$ is required. Although \eqref{alg:FPPA} is generally an implicit algorithm, if $E-M$ is chosen to be strictly block-triangular, then \eqref{alg:FPPA} is actually explicit. We set
    \[
        M\coloneqq\jz{I&P^{-1}L^\ast\\Q^{-1}L&I}
    \]
    in \eqref{alg:FPPA}, then the explicit \FPPA{} for \eqref{eq:mdl_0} follows as \Cref{alg:ex_FPPA}.
    \begin{algorithm}
        \caption{Explicit \FPPA{} for \eqref{eq:mdl_0}}\label{alg:ex_FPPA}
        \begin{algorithmic}[1]
            \REQUIRE $\alpha>0$, $P\in\bS_+^N$, $\{q_i\}_{i=1}^M\subset(0,+\infty)$, $x_0\in\bR^N$ and $y_0\in\bR^M$.
            \STATE $k\leftarrow 0$
            \REPEAT
                \STATE $y_{k+1}\leftarrow[\Proj_{\zkh{-\alpha,\alpha}}\kh{(y_k)_i+((Lx_k)_i-d_i)/q_i}]_{i=1}^M$
                \STATE $x_{k+1}\leftarrow\kh{P+2\alpha B}^{-1}\kh{L^\ast\kh{y_k-2y_{k+1}}+Px_k-\alpha b}$
                \STATE $k\leftarrow k+1$
            \UNTIL{stop criteria is met}
            \ENSURE $x_\ast\leftarrow x_{k}$
        \end{algorithmic}
    \end{algorithm}
    
\subsection{Convergence Analysis.}
    This section provides convergence theorem for \FPPA{} \cref{alg:ex_FPPA} based on \Cref{thm:conv}, and discusses one-dimension case for illustration propose. A direct application of \Cref{thm:conv} gives the following convergence result of \Cref{alg:ex_FPPA}.
    
    \begin{theorem}\label{thm:FPPA_conv}
        Suppose $\alpha>0$, $P\in\bS_{+}^N$ and $Q\in\bS_{+}^M$ is diagonal. If $\normm{Q^{-1/2}LP^{-1/2}}\leq1$, then for any $x_0\in\bR^N$ and $y_0\in\bR^M$, explicit \FPPA{} \Cref{alg:ex_FPPA} generates a sequence $\dkh{x_k}_{k\in\bN}$ converging to a solution of \eqref{eq:mdl_0}.
    \end{theorem}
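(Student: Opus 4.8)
The plan is to invoke \Cref{thm:conv}, whose sole hypothesis is the positive semi-definiteness $RM\in\bS^{N+M}$, and to translate this matrix condition into the stated spectral bound on $Q^{-1/2}LP^{-1/2}$ by a Schur complement argument. I would first recall that \Cref{alg:ex_FPPA} is, by construction, precisely the scheme \eqref{alg:FPPA} with the splitting matrix $M=\jz{I&P^{-1}L^\ast\\Q^{-1}L&I}$: because $E-M$ is strictly block upper-triangular, the update for $y_{k+1}$ decouples, and the explicit forms of the two updates follow from the closed formulas of $\prox_{\alpha g,P}$ and $\prox_{(\alpha h)^\ast,Q}$ in \Cref{pps:prox}. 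Consequently, to apply \Cref{thm:conv} it suffices to verify $RM\in\bS^{N+M}$.

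Next I would compute $RM$ directly. With $R=\jz{P&0\\0&Q}$ and the above $M$, a block multiplication gives $RM=\jz{P&L^\ast\\L&Q}$, which is symmetric since $P$ and $Q$ are. The main step is then to show that $\jz{P&L^\ast\\L&Q}\in\bS^{N+M}$ if and only if $\normm{Q^{-1/2}LP^{-1/2}}\leq1$. Since $P\in\bS_+^N$ and $Q\in\bS_+^M$ are positive definite, this block matrix is positive semi-definite exactly when its Schur complement $Q-LP^{-1}L^\ast$ with respect to $P$ lies in $\bS^M$. Applying the congruence $X\mapsto Q^{-1/2}XQ^{-1/2}$, which preserves semi-definiteness as $Q^{-1/2}$ is symmetric and invertible, this is equivalent to $I-WW^\ast\in\bS^M$, where $W\coloneqq Q^{-1/2}LP^{-1/2}$ and I have used $Q^{-1/2}LP^{-1}L^\ast Q^{-1/2}=WW^\ast$. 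Finally $I-WW^\ast$ is positive semi-definite precisely when the largest eigenvalue of $WW^\ast$, namely $\normm{W}^2$, is at most $1$, i.e. $\normm{W}\leq1$, which is the hypothesis.

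Having established $RM\in\bS^{N+M}$, \Cref{thm:conv} yields that for any initial $z_0=\kh{x_0,y_0}$ the sequence $z_k=\kh{x_k,y_k}$ produced by \eqref{alg:FPPA}, hence by \Cref{alg:ex_FPPA}, converges to a fixed-point $z_\ast=\kh{x_\ast,y_\ast}$ of \eqref{eq:cmp_fpe}. By the characterization theorem relating fixed-points of $\prox_{\Phi,R}\circ E$ to minimizers of \eqref{eq:mdl}, the limiting first component $x_\ast$ solves \eqref{eq:mdl_0}; in particular $\dkh{x_k}$ converges to such a solution.

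The only genuinely nontrivial step is the Schur complement equivalence; everything else is either bookkeeping or a direct appeal to the previously established \Cref{thm:conv} and \Cref{pps:prox}. I expect no serious obstacle, the single point requiring care being that \Cref{thm:conv} asks only for semi-definiteness rather than definiteness, which is exactly why the non-strict bound $\normm{W}\leq1$ suffices, even in the boundary case $\normm{W}=1$ where the Schur complement $Q-LP^{-1}L^\ast$ is singular.
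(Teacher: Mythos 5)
Your proposal is correct and follows essentially the same route as the paper: it identifies \Cref{alg:ex_FPPA} as the splitting scheme \eqref{alg:FPPA} with $M=\jz{I&P^{-1}L^\ast\\Q^{-1}L&I}$, verifies $RM\in\bS^{N+M}$, and invokes \Cref{thm:conv}. The only difference is that you spell out, via the Schur complement and the congruence by $Q^{-1/2}$, the equivalence $RM=\jz{P&L^\ast\\L&Q}\in\bS^{N+M}\iff\normm{Q^{-1/2}LP^{-1/2}}\leq1$, which the paper dismisses as ``direct to check''; your filled-in argument is valid, including the boundary case $\normm{Q^{-1/2}LP^{-1/2}}=1$ permitted by the semi-definite hypothesis of \Cref{thm:conv}.
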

    \begin{proof}
        By setting $R\coloneqq{\rm Diag}(P,Q)$ and
        \[
            M\coloneqq\jz{I&P^{-1}L^\ast\\Q^{-1}L&I},
        \]
        in \eqref{alg:FPPA}, we have \FPPA{} as an explicit iteration as
        \[
            \left\{
                \begin{aligned}
                    y_{k+1}&=\prox_{(\alpha h)^\ast,Q}\kh{y_k+Q^{-1}Lx_k},\\
                    x_{k+1}&=\prox_{\alpha g,P}\kh{P^{-1}L^\ast\kh{y_k-2y_{k+1}}+x_k}.
                \end{aligned}
            \right.
        \]
        The above iteration exactly fits into \Cref{alg:ex_FPPA} by applying \Cref{pps:prox}. It is direct to check that $RM\in\bS$ if and only if $\|Q^{-1/2}LP^{-1/2}\|\leq1$. Therefore conditions of \Cref{thm:conv} obtain, and the convergence directly follows.
    \end{proof}
    %{\color{blue}
    Here we discuss the simple case of \Cref{thm:FPPA_conv} with $d=1$ and $g=0$ to illustrate \Cref{thm:FPPA_conv} in greater detail. In this case the second-order elliptic equation reduces to the following homogeneous boundary value problem
    \begin{align}
        A:u'' &= f, \quad \text{in} \quad \zkh{0,1}, 
        \label{eq:1d_pde_1}\\
        u & = 0, \quad \text{on} \quad \dkh{0,1}. 
        \label{eq:1d_pde_2}
    \end{align}
    The $\Ct_h$ consists of intervals $\{I_i\coloneqq[ih,(i+1)h]\}_{i=0}^{1/h-1}$, and interior faces $\Ce_h^I=\dkh{ih}_{i=1}^{1/h-1}$ and boundary faces $\Ce_h^B=\dkh{0,1}$. Therefore $W_h$ and $\BV_h$ are linear spaces of piecewise quadratic and linear functions on $\Ct_h$ respectively.
    
    If we choose Lagrangian bases for $W_h$ and $\BV_h$, then each element $(w,\Bv)\in W_h\times\BV_h$ is represented by vector $(x_w,x_v)\in\bR^{3/h}\times\bR^{2/h}$, where $((x_w)_{3i},(x_w)_{3i+1},(x_w)_{3i+2})$ are function values of $w$ at points $(ih,(i+1/2)h,(i+1)h)$ on element $I_i$, and $((x_v)_{2i},(x_v)_{2i+1})$ are function values of $\Bv$ at points $(ih,(i+1)h)$ on element $I_i$. Then by \eqref{eq:L}, we have $L_1=[0_{(1/h-1)\times(3/h)},L_{12}]$, $L_2=[L_{21},0_{(1/h-1)\times(2/h)}]$ and $L_3=[L_{31},0_{2\times (2/h)}]$, where
    \begin{align}
        L_{12}&=\jz{
            0&1&-1&0&0&\cdots&0&0&0\\
            0&0&0&1&-1&\cdots&0&0&0\\
            \vdots&\vdots&\vdots&\vdots&\vdots&\ddots&\vdots&\vdots\\
            0&0&0&0&0&\cdots&1&-1&0
        }\in\bR^{(1/h-1)\times(2/h)},\label{eq:L11}\\
        L_{21}&=\jz{
            0&0&1&-1&0&0&0&\cdots&0&0&0&0\\
            0&0&0&0&0&1&-1&\cdots&0&0&0&0\\
            \vdots&\vdots&\vdots&\vdots&\vdots&\vdots&\vdots&\ddots&\vdots&\vdots&\vdots&\vdots\\
            0&0&0&0&0&0&0&\cdots&1&-1&0&0
        }\in\bR^{(1/h-1)\times(3/h)},\label{eq:L22}\\
        L_{31}&=\jz{1&0&0&\cdots&0&0&0\\
        0&0&0&\cdots&0&0&1}\in\bR^{2\times(3/h)}.\label{eq:L32}
    \end{align}
    Therefore we have the following corollary of \Cref{thm:FPPA_conv} for problem \Cref{eq:1d_pde_1,eq:1d_pde_2}.
    \begin{proposition}\label{thm:1d_fppa}
        Suppose $\alpha,\lambda>0$, $\{q_i\}_{i=1}^{2/h}\subset(0,+\infty)$ and $P=\lambda I_{5/h}$, $Q={\rm Diag}(q_i)_{i=1}^{2/h}$, $L$ is defined as in \cref{eq:L11,eq:L22,eq:L32}. If
        \begin{equation}\label{eq:1d}
            \lambda\geq\frac{\tau^2}{h^4}\max\dkh{2h^2\max\dkh{q_i^{-1}}_{i=1}^{2/h-2},q_{2/h-1}^{-1},q_{2/h}^{-1}},
        \end{equation}
        then for any $x_0\in\bR^{5/h}$ and $y_0\in\bR^{2/h}$, explicit \FPPA{} \Cref{alg:ex_FPPA} generates a sequence $\dkh{x_k}_{k\in\bN}$ converging to a solution of \eqref{eq:mdl_0}.
    \end{proposition}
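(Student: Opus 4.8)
The plan is to reduce \Cref{thm:1d_fppa} to the spectral-norm hypothesis of \Cref{thm:FPPA_conv}, namely $\normm{Q^{-1/2}LP^{-1/2}}\le 1$, and then to verify this hypothesis by an explicit evaluation of $LL^\ast$. Since $P=\lambda I_{5/h}$ gives $P^{-1}=\lambda^{-1}I$, applying the identity $\normm{G}^2=\normm{GG^\ast}$ (valid for the spectral norm of any matrix $G$) with $G=Q^{-1/2}LP^{-1/2}$ yields
\[
\normm{Q^{-1/2}LP^{-1/2}}^2=\normm{Q^{-1/2}LP^{-1}L^\ast Q^{-1/2}}=\lambda^{-1}\normm{Q^{-1/2}LL^\ast Q^{-1/2}}.
\]
Thus it suffices to show $\normm{Q^{-1/2}LL^\ast Q^{-1/2}}\le\lambda$ and then to read off the smallest admissible $\lambda$.

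The key step is to show that $LL^\ast$ is in fact diagonal. I would write $L$ as the vertical stack of the three scaled blocks $R_1:=\tau h^{-1}L_1$, $R_2:=\tau h^{-1}L_2$ and $R_3:=\tau h^{-2}L_3$, so that $LL^\ast$ acquires the $3\times3$ block form with entries $R_iR_j^\ast$. By \cref{eq:L11,eq:L22,eq:L32}, the block $L_1$ acts only on the $\Bv$-coefficients $x_v$ (its $x_w$-columns vanish), whereas $L_2$ and $L_3$ act only on the $w$-coefficients $x_w$; hence $R_1R_2^\ast=R_1R_3^\ast=0$. The remaining cross term $R_2R_3^\ast$ is a multiple of $L_{21}L_{31}^\ast$, which vanishes because $L_{31}$ selects exactly the two boundary degrees of freedom of $w$ (the first and last components of $x_w$), and these never occur in an interior jump encoded by $L_{21}$; consequently the corresponding columns of $L_{21}$ are zero and $L_{21}L_{31}^\ast=0$.

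It remains to evaluate the diagonal blocks. In each of $L_{12}$, $L_{21}$ and $L_{31}$ the rows have pairwise disjoint supports: each interior-face row carries a single $+1/-1$ pair on a distinct pair of consecutive coefficients (the two one-sided traces at that node), and the two rows of $L_{31}$ are distinct coordinate selectors. Hence $L_{12}L_{12}^\ast=L_{21}L_{21}^\ast=2I$ and $L_{31}L_{31}^\ast=I$, so that
\[
LL^\ast=\mathrm{Diag}\bigl(\underbrace{2\tau^2h^{-2},\dots,2\tau^2h^{-2}}_{2/h-2},\,\tau^2h^{-4},\,\tau^2h^{-4}\bigr).
\]
Since $Q=\mathrm{Diag}(q_i)$ is diagonal, $Q^{-1/2}LL^\ast Q^{-1/2}$ is diagonal with $i$-th entry $q_i^{-1}(LL^\ast)_{ii}$, and its spectral norm is the maximum of these entries. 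Factoring out $\tau^2h^{-4}$ gives
\[
\normm{Q^{-1/2}LL^\ast Q^{-1/2}}=\frac{\tau^2}{h^4}\max\dkh{2h^2\max\dkh{q_i^{-1}}_{i=1}^{2/h-2},q_{2/h-1}^{-1},q_{2/h}^{-1}},
\]
so the stated lower bound on $\lambda$ is exactly equivalent to $\normm{Q^{-1/2}LL^\ast Q^{-1/2}}\le\lambda$, i.e.\ to $\normm{Q^{-1/2}LP^{-1/2}}\le1$. \Cref{thm:FPPA_conv} then delivers convergence.

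The main obstacle is the verification that $LL^\ast$ is diagonal, which rests entirely on careful index bookkeeping through the definitions \cref{eq:L11,eq:L22,eq:L32}: one must confirm both the vanishing of all off-diagonal blocks (the $x_w$/$x_v$ separation, together with the boundary-versus-interior disjointness giving $L_{21}L_{31}^\ast=0$) and the pairwise disjointness of supports within each block. Once this combinatorial structure is established, the remainder is a direct reading of diagonal entries and reduces to the arithmetic already displayed above.
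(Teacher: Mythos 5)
Your proposal is correct and follows essentially the same route as the paper's proof: reduce to the hypothesis $\normm{Q^{-1/2}LP^{-1/2}}\leq 1$ of \Cref{thm:FPPA_conv}, compute $LL^\ast=\frac{\tau^2}{h^4}{\rm Diag}\kh{2h^2I_{2/h-2},I_2}$, and read off that condition \eqref{eq:1d} is exactly the required norm bound. The only difference is that you spell out the index bookkeeping (vanishing cross blocks and disjoint row supports) that the paper asserts without detail.
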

    \begin{proof}
        By \Cref{thm:FPPA_conv}, it is sufficient to prove $\|Q^{1/2}LP^{-1/2}\|<1$. Observe that
        \[
            LL^\ast=\frac{\tau^2}{h^2}{\rm Diag}\kh{L_{11}L_{11}^\ast,L_{22}L_{22}^\ast,h^{-2}L_{32}L_{32}^\ast}=\frac{\tau^2}{h^4}{\rm Diag}\kh{2h^2I_{2/h-2},I_2}.
        \]
        Then by condition, we have
        \begin{align*}
            \normm{Q^{-1/2}LP^{-1/2}}^2&=\normm{Q^{-1/2}LP^{-1}L^\ast Q^{-1/2}}\\
            &=\frac{\tau^2}{\lambda h^4}\max\dkh{2h^2\max\dkh{q_i^{-1}}_{i=1}^{2/h-2},q_{2/h-1}^{-1},q_{2/h}^{-1}}\leq1.
        \end{align*}
        Finally, by \Cref{thm:FPPA_conv}, we finish the proof.
    \end{proof}

    \Cref{thm:1d_fppa} extends results from \cite{li2015multi} as previously discussed, where the strict inequality is required in \eqref{eq:1d}. \Cref{thm:1d_fppa} also implies that for all given $h$, $P=\lambda I$ and diagonal $Q\in\bS_+$, \FPPA{} \Cref{alg:ex_FPPA} always converges with sufficiently large $\lambda>0$. Similar result can be easily obtained for general $P\in\bS_+$ with sufficiently large $\|P\|$. For case $d=2,3$, the matrix $L$ depends on the adjacency matrix. Nevertheless there are always similar structures as \cref{eq:L11,eq:L22,eq:L32} as long as Lagrangian bases are picked for $W_n$ and $\BV_h$.
    %}
    
\section{Numerical Results}

In this section we provide numerical experiments for \cref{non_div_governing} validating the error estimates obtained in this work via explicit \FPPA{} method. Specially, we test $k=2$ in numerical experiments, with regular solution on square domain, or singular solution on L-shaped domain. In each case we test different type of coefficients $A$, i.e.\ constant, continuous and discontinuous coefficients. In \Cref{ssec:S,ssec:L} we choose the Lagrangian basis for $W_h$ and $\BV_h$ and show the order of various errors as $N=1/h$ doubles. In \Cref{ssec:M} we show examples of solving \cref{non_div_governing} with multiscale DG bases. We demonstrate the numerical solution naturally possess a sparse representation with such choice of basis.

\subsection{Numerical Results for Various Coefficients on Square Domain}\label{ssec:S}

We first consider problem \eqref{non_div} on domain $[0,1]\times[0,1]$ with constant, continuous and discontinuous coefficient matrices $A$, respectively. Whatever coefficient matrix $A$ is, the right-hand side functions $f$ and $g$ in \eqref{non_div} are chosen such that the exact solution is always
\[
    u\kh{x,y}=\sin\kh{\pi x}\sin\kh{\pi y}.
\]

\subsubsection{Constant Coefficients}

Here we choose
\[
    A=\jz{2&1\\1&2}
\]
in \eqref{non_div}. \Cref{tab:SS_A1} shows various error measures and convergence rates.
\begin{table}[!ht]
    \centering
    \begin{tabular}{c|cccccccc}
        $N$ & $\normm{u-u_h}_{2}$ & order & $\norm{u-u_h}_{1,2}$ & order & $\norm{u-u_h}_{2,2}$ & order & $\normm{\Bq-\Bq_h}_{0,2}$ & order  \\\hline
4 & 4.92e$-$02 & -- & 2.58e$-$01 & -- & 3.52e+00 & -- & 2.59e$-$01 & -- \\
8 & 1.42e$-$02 & 1.80& 7.49e$-$02 & 1.78& 1.76e+00 & 1.00& 7.59e$-$02 & 1.77\\
16 & 3.80e$-$03 & 1.90& 2.01e$-$02 & 1.90& 8.61e$-$01 & 1.04& 2.02e$-$02 & 1.91\\
32 & 9.75e$-$04 & 1.96& 5.16e$-$03 & 1.96& 4.23e$-$01 & 1.02& 5.14e$-$03 & 1.98\\
64 & 2.46e$-$04 & 1.99& 1.30e$-$03 & 1.99& 2.10e$-$01 & 1.01& 1.29e$-$03 & 2.00
    \end{tabular}
    \caption{Various errors and convergence rates for regular solution with constant coefficients.}
    \label{tab:SS_A1}
\end{table}

\subsubsection{Continuous Coefficients}

Here we choose
\[
    A=\jz{1+x&\sqrt{xy}\\\sqrt{xy}&1+y}
\]
in \eqref{non_div}. \Cref{tab:SS_A2} shows various error measures and convergence rates.
\begin{table}[!ht]
    \centering
    \begin{tabular}{c|cccccccc}
        $N$ & $\normm{u-u_h}_{2}$ & order & $\norm{u-u_h}_{1,2}$ & order & $\norm{u-u_h}_{2,2}$ & order & $\normm{\Bq-\Bq_h}_{2}$ & order  \\\hline
4 & 4.48e$-$02 & -- & 2.44e$-$01 & -- & 3.55e+00 & -- & 2.51e$-$01 & -- \\
8 & 1.21e$-$02 & 1.89& 6.59e$-$02 & 1.89& 1.75e+00 & 1.02& 6.68e$-$02 & 1.91\\
16 & 3.18e$-$03 & 1.93& 1.72e$-$02 & 1.94& 8.56e$-$01 & 1.03& 1.72e$-$02 & 1.96\\
32 & 8.08e$-$04 & 1.98& 4.36e$-$03 & 1.98& 4.23e$-$01 & 1.02& 4.30e$-$03 & 2.00\\
64 & 2.02e$-$04 & 2.00& 1.09e$-$03 & 2.00& 2.10e$-$01 & 1.01& 1.07e$-$03 & 2.01
    \end{tabular}
    \caption{Various errors and convergence rates for regular solution with continuous coefficients.}
    \label{tab:SS_A2}
\end{table}

\subsubsection{Discontinuous Coefficients}

Here we choose
\[
    A=\jz{2&{\rm sgn}\kh{\kh{x-0.5}\kh{y-0.5}}\\{\rm sgn}\kh{\kh{x-0.5}\kh{y-0.5}}&2}
\]
in \eqref{non_div}. \Cref{tab:SS_A3} shows various error measures and convergence rates.
\begin{table}[!ht]
    \centering
    \begin{tabular}{c|cccccccc}
        $N$ & $\normm{u-u_h}_{2}$ & order & $\norm{u-u_h}_{1,2}$ & order & $\norm{u-u_h}_{2,2}$ & order & $\normm{\Bq-\Bq_h}_{2}$ & order  \\\hline
4 & 4.89e$-$02 & -- & 2.56e$-$01 & -- & 3.50e+00 & -- & 2.52e$-$01 & -- \\
8 & 1.33e$-$02 & 1.88& 7.04e$-$02 & 1.86& 1.75e+00 & 1.00& 7.02e$-$02 & 1.84\\
16 & 3.58e$-$03 & 1.89& 1.88e$-$02 & 1.90& 8.57e$-$01 & 1.03& 1.88e$-$02 & 1.90\\
32 & 9.38e$-$04 & 1.93& 4.88e$-$03 & 1.95& 4.23e$-$01 & 1.02& 4.84e$-$03 & 1.96\\
64 & 2.40e$-$04 & 1.96& 1.24e$-$03 & 1.97& 2.10e$-$01 & 1.01& 1.23e$-$03 & 1.98
    \end{tabular}
    \caption{Various errors and convergence rates for regular solution with discontinuous coefficients.}
    \label{tab:SS_A3}
\end{table}

\subsection{Numerical Results for Various Coefficients on L-Shaped Domain}\label{ssec:L}
We then consider singular solution of \eqref{non_div} on L-shaped domain  with constant, continuous and discontinuous coefficient matrices $A$, respectively. Whatever coefficient matrix $A$ is, the right-hand side functions $f$ and $g$ in \eqref{non_div} are chosen such that the exact solution is always
\[
    u\kh{r,\theta}=r^{2/3}\sin\kh{2\theta/3}
\]
over the L-shaped domain $\zkh{-1,1}\times\zkh{-1,1}\setminus(0,1]\times[-1,0)$.

\subsubsection{Constant Coefficients}

Here we choose
\[
    A=\jz{1&0\\0&1}
\]
in \eqref{non_div}. \Cref{tab:SING_A1} shows various error measures and convergence rates.
\begin{table}[!ht]
    \centering
    \begin{tabular}{c|cccccccc}
        $N$ & $\normm{u-u_h}_{2}$ & order & $\norm{u-u_h}_{1,2}$ & order & $\norm{u-u_h}_{2,2}$ & order & $\normm{\Bq-\Bq_h}_{0,2}$ & order  \\\hline
4 & 9.32e$-$03 & -- & 7.93e$-$02 & -- & 1.08e+00 & -- & 7.99e$-$02 & -- \\
8 & 6.86e$-$03 & 0.44& 3.45e$-$02 & 1.20& 8.67e$-$01 & 0.32& 3.64e$-$02 & 1.13\\
16 & 5.29e$-$03 & 0.38& 1.99e$-$02 & 0.79& 7.08e$-$01 & 0.29& 2.08e$-$02 & 0.81\\
32 & 3.46e$-$03 & 0.61& 1.21e$-$02 & 0.72& 5.89e$-$01 & 0.27& 1.23e$-$02 & 0.75\\
64 & 2.34e$-$03 & 0.57& 7.80e$-$03 & 0.63& 4.85e$-$01 & 0.28& 7.88e$-$03 & 0.65
    \end{tabular}
    \caption{Various errors and convergence rates for singular solution with constant coefficients.}
    \label{tab:SING_A1}
\end{table}

\subsubsection{Continuous Coefficients}

Here we choose
\[
    A=\jz{1+\norm{x}&\sqrt{\norm{xy}}\\\sqrt{\norm{xy}}&1+\norm{y}}
\]
in \eqref{non_div}. \Cref{tab:SING_A2} shows various error measures and convergence rates.
\begin{table}[!ht]
    \centering
    \begin{tabular}{c|cccccccc}
        $N$ & $\normm{u-u_h}_{2}$ & order & $\norm{u-u_h}_{1,2}$ & order & $\norm{u-u_h}_{2,2}$ & order & $\normm{\Bq-\Bq_h}_{2}$ & order  \\\hline
4 & 1.06e$-$01 & -- & 8.47e$-$01 & -- & 5.02e+00 & -- & 8.50e$-$01 & -- \\
8 & 7.78e$-$03 & 3.76& 3.73e$-$02 & 4.51& 8.69e$-$01 & 2.53& 3.89e$-$02 & 4.45\\
16 & 6.31e$-$03 & 0.30& 2.28e$-$02 & 0.71& 7.08e$-$01 & 0.30& 2.36e$-$02 & 0.72\\
32 & 4.28e$-$03 & 0.56& 1.44e$-$02 & 0.66& 5.90e$-$01 & 0.26& 1.47e$-$02 & 0.68\\
64 & 3.04e$-$03 & 0.50& 9.85e$-$03 & 0.55& 4.88e$-$01 & 0.27& 9.93e$-$03 & 0.57
    \end{tabular}
    \caption{Various errors and convergence rates for singular solution with continuous coefficients.}
    \label{tab:SING_A2}
\end{table}

\subsubsection{Discontinuous Coefficients}

Here we choose
\[
    A=\jz{2&{\rm sgn}\kh{xy}\\{\rm sgn}\kh{xy}&2}
\]
in \eqref{non_div}. \Cref{tab:SING_A3} shows various error measures and convergence rates.
\begin{table}[!ht]
    \centering
    \begin{tabular}{c|cccccccc}
        $N$ & $\normm{u-u_h}_{2}$ & order & $\norm{u-u_h}_{1,2}$ & order & $\norm{u-u_h}_{2,2}$ & order & $\normm{\Bq-\Bq_h}_{2}$ & order  \\\hline
4 & 9.96e$-$03 & -- & 7.94e$-$02 & -- & 1.08e+00 & -- & 8.01e$-$02 & -- \\
8 & 4.76e$-$03 & 1.06& 3.16e$-$02 & 1.33& 8.67e$-$01 & 0.32& 3.39e$-$02 & 1.24\\
16 & 4.81e$-$03 & -0.01& 1.87e$-$02 & 0.76& 7.11e$-$01 & 0.29& 1.99e$-$02 & 0.77\\
32 & 4.17e$-$03 & 0.21& 1.41e$-$02 & 0.41& 5.96e$-$01 & 0.26& 1.44e$-$02 & 0.47\\
64 & 3.60e$-$03 & 0.21& 1.16e$-$02 & 0.27& 4.93e$-$01 & 0.27& 1.17e$-$02 & 0.30
    \end{tabular}
    \caption{Various errors and convergence rates for singular solution with discontinuous coefficients.}
    \label{tab:SING_A3}
\end{table}

\newpage

\subsection{Numerical Results Using a Multiscale Basis}\label{ssec:M}
In this section we perform examples of solving \cref{non_div_governing} using multiscale bases to illustrate the adaptivity/sparsity feature of $L^1$ stabilization. %The first example we are going to consider is the smooth one we did before:
%\[
%u(x) = sin \pi x, \quad x \in [0,1].
%\]
We test \cref{eq:1d_pde_1,eq:1d_pde_2} with following example
\begin{equation}\label{mdl:multi}
u(x) = \begin{cases} 
      t\kh{\kh{\frac{x}{t}}^{n+1}-\frac{x}{t}}, & 0 \leq x \leq t, \\
      \kh{1-t}\kh{\frac{1-x}{1-t}-\kh{\frac{1-x}{1-t}}^{n+1}}, & t< x\leq 1, \\
   \end{cases}
  \end{equation}
  where $0<t<1$ and $n>0$. The corresponding $q$ has a kink at $x=t$:
  \[
q(x) = u'(x) = \begin{cases} 
      \kh{n+1}\kh{\frac{x}{t}}^{n}-1,  & 0 \leq x \leq t, \\
      \kh{n+1}\kh{\frac{1-x}{1-t}}^{n}-1, & t< x\leq 1. \\
   \end{cases}
  \]
  
  Now we introduce the multiscale piecewise polynomial space and basis on $[0,1]$ once presented in \cite{Chen15}. Define $\bX_n\coloneqq W_{2^{-n}}$ for $n\in\bN$ and transformations of $f\in L^2\zkh{0,1}$ as
  \[
    \kh{\tau_0f}\kh{x}\coloneqq f\kh{2x}\chi_{\zkh{0,1}}\kh{2x},\qquad \kh{\tau_1f}\kh{x}\coloneqq f\kh{2x-1}\chi_{\zkh{0,1}}\kh{2x-1},
  \]
  where $\chi_{\zkh{0,1}}:\bR\to\bR$ takes $1$ on $\zkh{0,1}$ and $0$ otherwise. Then it is proven in \cite{Chen15} that for all $n\in\bN$,
  \[
    \bX_n=\bX_0\oplus^\perp\bW_1\oplus^\perp\cdots\oplus^\perp\bW_n,
  \]
  where $\bW_1$ is defined by $\bX_2=\bX_1\oplus^\perp\bW_1$ and for $n\geq 2$,
  \[
    \bW_n\coloneqq \tau_0\bW_{n-1}\oplus \tau_1\bW_{n-1}.
  \]
  Here `$\oplus^\perp$' indicates addition between two orthogonal sets in $L^2$.
  
  In \cite{Chen15} the multiscale piecewise quadratic polynomial basis is proposed as follows. There are three basis functions for $\bX_0$, as
  \[
    w_{00}\kh{x}\coloneqq1,\quad w_{01}\kh{x}\coloneqq\sqrt{3}\kh{2x-1},\quad w_{02}\kh{x}\coloneqq\sqrt{5}\kh{6x^2-6x+1},
  \]
  and for $\bW_1$,
  \begin{align*}
      w_{10}\kh{x}&\coloneqq\begin{cases}
        1-6x,&x\in\zkh{0,1/2},\\
        5-6x,&x\in(1/2,1],
      \end{cases}\\
      w_{11}\kh{x}&\coloneqq\begin{cases}
        \frac{\sqrt{91}}{31}\kh{240x^2-116x+9},&x\in\zkh{0,1/2},\\
        \frac{\sqrt{91}}{31}\kh{3-4x},&x\in(1/2,1],
      \end{cases}\\
      w_{12}\kh{x}&\coloneqq\begin{cases}
        \frac{\sqrt{91}}{31}\kh{4x-1},&x\in\zkh{0,1/2},\\
        \frac{\sqrt{91}}{31}\kh{240x^2-364x+133},&x\in(1/2,1].
      \end{cases}
  \end{align*}
  Define $M_1\coloneqq\dkh{w_{1i}}_{i=1}^3$ and $M_n\coloneqq \tau_0 M_{n-1}\cup\tau_1 M_{n-1}$ for all $n\geq 2$. Then \cite{Chen15} proves that $M_n$ is a basis of $\bW_n$ for $n\geq 1$. Such multiscale basis has vanishing moment of $4$ since by construction we have $M_n\perp\bX_0$, and this gives us a sparse representation of solutions. Finally the elements of representation are truncated under certain threshold.
  
  For problem \eqref{mdl:multi}, we set $t=1/6$ and $h=2^{-7}$, and the reconstructed function and coefficients of single-/multi-scale basis for reconstructed $u$ are shown in Figure \ref{fig:uni_plot}. Here we truncate all the multi-scale coefficient with absolute value under $10^{-5}$. One can see that the coefficients of single-scale basis are dense, while multiscale basis decade rapidly due to its vanishing moment, but have strokes around kink $x=1/6$. This shows the adaptivity/sparsity of the proposed multiscale basis comparing to single-scale basis. \Cref{tab:1d_error} quantifies the $L^2$ error and the coefficient sparsity of single-/multi-scale solutions with various truncation thresholds.
  
  \begin{figure}[ht]
      \centering
      \includegraphics[width=0.8\textwidth]{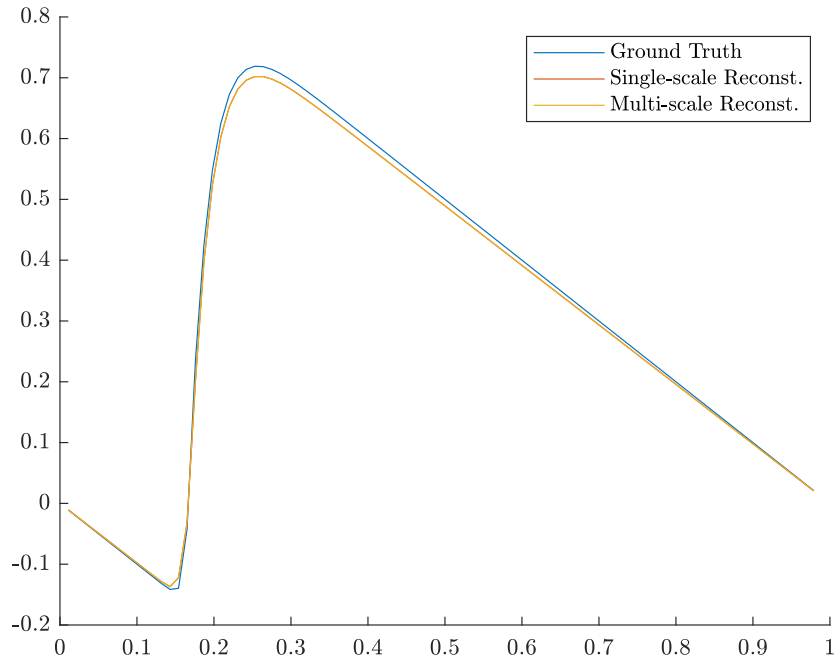}
      \caption{Ground truth and reconstructed solutions with single- and multi-scale bases. Single- and multi-scale solutions overlap in plot.}\label{fig:multi_sol}
  \end{figure}
  \begin{figure}[ht]
      \centering
      \includegraphics[width=0.8\textwidth]{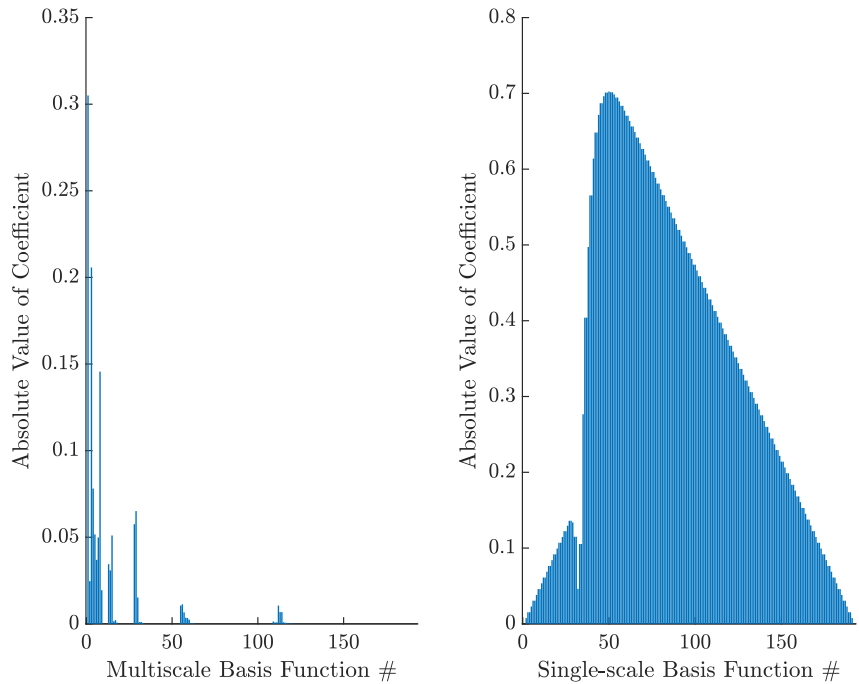}
      \caption{Coefficients comparison of multi- and single-scale bases.}\label{fig:uni_plot}
  \end{figure}
  \begin{table}[ht]
      \centering
      \begin{tabular}{c|ccc}
          Basis & Truncation Threshold & $\normm{u-u_h}_2$ & Coefficient Sparsity \\\hline
          Single-scale & -- & $1.097\times10^{-2}$ & $0\%$\\ \hline 
          \multirow{3}{*}{Multi-scale} 
          & $10^{-2}$ & $1.181\times10^{-2}$ & $90.63\%$\\
          & $10^{-3}$ & $1.099\times10^{-2}$ & $85.42\%$\\
          & $10^{-4}$ & $1.097\times 10^{-2}$ & $78.13\%$
      \end{tabular}
      \caption{The $L^2$-error and the coefficient sparsity of single- and multi-scale basis solutions with various truncation thresholds. Coefficient sparsity is defined as the percentage of zero elements.}
      \label{tab:1d_error}
  \end{table}

\section{Concluding Remark}

In this paper, we propose a novel $DG$ method for the second order elliptic equations in the non-divergence form with $L^1$ stabilization. The motivation for such an approach is to develop numerical schemes for PDEs such that the resulting solutions have sparse representation. This work is the first attempt along this path. Due to the use of $L^1$ stabilization in the minimization formulation, we are unable to use existing analytical tools for gradient based methods. We believe the analysis used in the error estimates and the 
\FPPA{} algorithm can be applied to more challenging problems with low regularity settings. It is worth to point out that the original $L^1$ regularization technique is different from what we proposed in this paper. Namely, the purpose of regularization is to relax the functional from ill-posedness. Meanwhile, in our work the $L^1$ term is a stablization term which ensures the convergence and existence of the minimizer of the discrete problem without breaking the consistency with the PDE. In turn, the sparsity promoting feature of the multiscale piecewise polynomial basis is not fully utilized in the current formulation. Nevertheless, we believe that a possible way to improve the performance of the scheme is to add a regularization term such as $\|Bx\|_1$ in this functional where $B$ is the multiscale piecewise polynomial decomposition. This is subject to our ongoing work in this project.   
 
%%%%%%%%%%%%%%%%%%%%%%%%%%%%%%%%%%%%%%%%%%%%%%%%%%%%%%%%%%%%%%%%%%%%%%

\bibliographystyle{amsplain}
\bibliography{ref} 

%\begin{thebibliography}{10}
%\end{thebibliography}

\end{document}